\def\bysame{$\underline{\hskip.5truein}$}
\newtheorem{theorem}{Theorem}[section]
\newtheorem{lemma}[theorem]{Lemma}
\newtheorem{corollary}[theorem]{Corollary}
\newtheorem{conjecture}[theorem]{Conjecture}
\newtheorem{definition}[theorem]{Definition}
\newtheorem{proposition}[theorem]{Proposition}
\newtheorem{remark}[theorem]{Remark}
\newtheorem{question}[theorem]{Question}
\def\Isom{\mbox{\rm{Isom}}}
\def\orb{{\rm{orb}}}
\def\Z{\mathbb{Z}}
\def\O{\mathcal{O}}
\def\TO{\widetilde{\mathcal{O}}}
\title{Knot complements, hidden symmetries and reflection orbifolds} 
\author{M. Boileau\thanks{Partially supported 
by Institut Universitaire de France.}, S. Boyer\thanks{Partially supported by NSERC grant OGP0009446.}, R. Cebanu\thanks{Partially supported from the contract PN-II-ID-PCE 1188 265/2009.} \ \& G. S.  Walsh\thanks{Partially supported by NSF grant 1207644.}}
\begin{document}
\maketitle

\abstract{In this article we examine the conjecture of Neumann and Reid that the only hyperbolic knots in the $3$-sphere which admit hidden symmetries are the figure-eight knot and the two dodecahedral knots. Knots whose complements cover hyperbolic reflection orbifolds admit hidden symmetries, and we verify the Neumann-Reid conjecture for knots which cover small hyperbolic reflection orbifolds. We also show that a reflection orbifold covered by the complement of an AP knot is necessarily small.  Thus when $K$ is an AP knot,  the complement of $K$ covers a reflection orbifold exactly when $K$ is either the figure-eight knot or one of the dodecahedral knots.}

\section{Introduction} 
\noindent Commensurability is the equivalence relation on families of spaces or orbifolds determined by the property of sharing a finite degree cover. It arises in a number of areas of interest and is connected with other important concepts. For instance, two complete, finite volume, non-compact hyperbolic orbifolds are commensurable if and only if their fundamental groups are quasi-isometric \cite{Sch}. 

In this paper we consider commensurability relations between the complements of hyperbolic knots in the $3$-sphere. For convenience we say that two such knots  are commensurable if their complements have this property. There are two conditions which play a pivotal role here, though more for their rarity than their regularity. The first is {\it arithmeticity}, a commensurability invariant which, among knots, is only satisfied by the figure-eight \cite{Re1}. In particular, the figure-eight is the unique knot in its commensurability class. A basic result due to Margulis \cite{Mar} states that the commensurability class of a non-arithmetic finite-volume hyperbolic orbifold has a minimal element. In other words, there is a hyperbolic orbifold covered by every member of the class. This holds for the complement of any hyperbolic knot $K$ other than the figure eight. For such knots we use 
$\O_{full}(K)$ to denote the minimal orbifold in the commensurability class of $S^3 \setminus K$ and $\O_{min}(K)$ to denote the minimal orientable orbifold in the commensurability class of 
$S^3 \setminus K$. These orbifolds often coincide, but if they don't, $\O_{min}(K)$ is the orientation double cover of $\O_{full}(K)$. 

The second key condition in the study of knot commensurability is the existence or not of {\it hidden symmetries} of a hyperbolic knot $K$. In other words, the existence or not of an isometry between finite degree covers of $S^3 \setminus K$ which is not the lift of an isometry of $S^3 \setminus K$. Equivalently, a hyperbolic knot complement admits hidden symmetries if and only if it covers some orbifold irregularly, or (for non-arithmetic knots) if and only if the minimal orientable orbifold in its commensurability class has a {\it rigid cusp}. (This means that the cusp cross section is a Euclidean turnover.) See \cite[Proposition 9.1]{NR1} for a proof of these equivalences. The latter characterisation shows that if $K_1$ and $K_2$ are commensurable, then $K_1$ admits hidden symmetries if and only if $K_2$ does. 


The commensurability classification of hyperbolic knots which admit no hidden symmetries was extensively studied in \cite{BBCW1}, where it was shown that such classes contain at most three knots. Further, there are strong constraints on the topology of a hyperbolic knot without hidden symmetries if its commensurability class contains more than one knot. For instance, it is fibred. 

To date, there are only three hyperbolic knots in $S^3$ which are known to admit hidden symmetries: the figure-eight and the two dodecahedral knots of Aitchison and Rubinstein described in \cite{AR}. Each is alternating, and the minimal orbifold in the commensurability class of the dodecahedral knot complements is a reflection orbifold (see \S 2). Using known restrictions on the trace field of a knot with hidden symmetries, Goodman, Heard, and Hodgson \cite{GHH} have verified that these are the only examples of hyperbolic knots with 15 or fewer crossings which admit hidden symmetries. 
This lends numerical  support to the following conjecture of W. Neumann and A. Reid. 

\begin{conjecture} {\rm (Neumann-Reid)} \label{conj:hidden} {\it The figure-eight knot and the two dodecahedral knots are the only hyperbolic knots in $S^3$ admitting hidden symmetries.} 
\end{conjecture}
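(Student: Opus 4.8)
The plan is to treat the conjecture as the composite of three reductions, two of which are supplied by the results summarised above while the third is the genuinely open core of the problem. Throughout, the guiding principle is that since knot complements have unbounded volume and hence cover $\O_{min}(K)$ in unbounded degree, no naive enumeration can finitise the problem; every step must instead be driven by the rigidity of the turnover cusp and by the homological peculiarity of a knot in $S^3$.

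First I would dispose of the arithmetic case: by Reid's theorem the figure-eight is the unique arithmetic knot, and it does admit hidden symmetries, so it is forced into the stated conclusion. Hence assume $K$ is non-arithmetic, so that $\O_{min}(K)$ exists by Margulis, and assume $K$ admits hidden symmetries, so that by the cited Proposition~9.1 of Neumann--Reid the orbifold $\O_{min}(K)$ has a rigid cusp whose cross-section is a Euclidean turnover $S^2(2,3,6)$, $S^2(2,4,4)$ or $S^2(3,3,3)$. Together with the known trace-field restrictions for hidden-symmetry knots, this pins the cusp field, and hence the invariant trace field (which rigidity forces to coincide with it), down to $\mathbb{Q}(\sqrt{-3})$ or $\mathbb{Q}(i)$.

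Second I would reduce from ``admits hidden symmetries'' to ``covers a hyperbolic reflection orbifold.'' This is the step I expect to be the main obstacle and is where a complete proof stands or falls: a rigid cusp in $\O_{min}(K)$ produces a priori only orientation-preserving hidden symmetries, whereas a reflection orbifold requires orientation-reversing commensurator elements, i.e.\ $\O_{full}(K)\neq\O_{min}(K)$. I would try to force such a reflection out of the turnover structure itself. The torus cusp of $S^3\setminus K$ covers the turnover, and the peripheral data of a knot in $S^3$ --- a meridian generating $H_1$, a null-homologous longitude, and the symmetry group of the pair $(S^3,K)$ realised inside $N(\Gamma)/\Gamma$ --- severely constrain how the deck group can act on the cusp. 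The goal is to show that the reflective symmetry of the underlying turnover triangle extends across the commensurator quotient, so that $\O_{full}(K)$ is a Coxeter (reflection) orbifold; equivalently, to show that the abelian-by-turnover peripheral structure compatible with a knot in $S^3$ only embeds as a cover of an orbifold that is the orientation double cover of a reflection orbifold. Should this fail for some $K$, that $K$ would be a hidden-symmetry knot not covering any reflection orbifold, lying outside the reach of the present methods and witnessing the genuine difficulty of the conjecture.

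Third, having placed $S^3\setminus K$ as a cover of a hyperbolic reflection orbifold, I would remove the two auxiliary hypotheses under which the conclusion is already known --- smallness of the reflection orbifold and the AP condition --- and then conclude by the stated classification that $K$ is the figure-eight or a dodecahedral knot. To drop the AP hypothesis I would reexamine the proof that an AP-knot-covered reflection orbifold is necessarily small, isolate the single place where the AP condition is invoked, and attempt to replace it by the turnover-cusp constraints and the peripheral structure of a knot in $S^3$, which hold unconditionally; establishing smallness in this generality then feeds directly into the classification of knots covering small reflection orbifolds. I therefore expect the second reduction --- manufacturing an honest reflection from a mere rigid cusp --- rather than this last bookkeeping step, to be the decisive and hardest part of proving the full conjecture.
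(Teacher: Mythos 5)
The statement you are proving is Conjecture~\ref{conj:hidden}, and the paper contains no proof of it: it is precisely the open Neumann--Reid conjecture, which the authors only \emph{verify in special cases}, namely for knots whose complements cover a \emph{small} reflection orbifold (Theorem~\ref{T:reflections}) and for AP knots covering any reflection orbifold (Theorem~\ref{T:AP}, via Proposition~\ref{P:small}). Your proposal is likewise not a proof, and you say so yourself; the issue is that your assessment of where it fails is only half right. Your second reduction --- manufacturing an orientation-reversing commensurator element from a rigid cusp alone --- is indeed the fatal gap, but it is worse than ``hard'': by Lemma~\ref{l:prelim}(2), any reflection in the commensurability class forces $S^3\setminus K$ to contain an immersed totally geodesic surface, so your plan would in particular prove that \emph{every} hidden-symmetry knot complement contains such a surface. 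That is a strong geometric statement which the homological peripheral data you invoke (meridian generating $H_1$, null-homologous longitude, $N(\Gamma_K)/\Gamma_K \cong \hbox{Isom}(S^3,K)$) cannot deliver; those constraints are orientation-blind and give no purchase on whether $\O_{full}(K)\neq\O_{min}(K)$. Note also a factual slip in your first step: rigidity does \emph{not} force the invariant trace field to equal the cusp field --- the dodecahedral knots themselves have cusp field $\mathbb{Q}(\sqrt{-3})$ but invariant trace field $\mathbb{Q}(\sqrt{-3},\sqrt{5})$, so the trace-field restriction cited from Goodman--Heard--Hodgson concerns the cusp field only.

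Your third step is also misjudged as ``bookkeeping.'' Even granting a reflection, the paper's own partial results show the remaining work is substantive and currently incomplete: smallness (no closed essential $2$-suborbifold) is essential to the combinatorial classification in Lemma~\ref{l:charsmall}, and when only $\O_{min}(K)$ admits a reflection (rather than $\O_{full}(K)$ being a reflection orbifold) Theorem~\ref{thm:areflection} leaves the candidate orbifolds $(b)$, $(c)$, $(d)$ --- $Y333$, $Y244$, $XO$ --- which the authors explicitly \emph{conjecture}, but cannot prove, are not covered by knot complements. Similarly, the AP hypothesis is not an isolated invocation one can excise: it enters Proposition~\ref{P:small} through the annulus theorem, supplying the essential annulus from the closed essential surface to $\partial M_K$ whose double yields the non-separating incompressible torus that drives the contradiction; no replacement built from ``turnover-cusp constraints'' is available, and the dodecahedral knots show closed essential surfaces genuinely occur in the relevant knot complements. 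In short, your proposal correctly reproduces the paper's known reductions as its outer layers, but both the middle step (rigid cusp $\Rightarrow$ reflection) and the final step (removing smallness/AP) are open problems, so the conjecture remains unproved by this route.
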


\noindent In this article we investigate the Neumann-Reid conjecture in the context of non-arithmetic knots $K$ for which $\O_{full}(K)$ contains a reflection, a condition which implies that the knot admits hidden symmetries (Lemma \ref{l:prelim}). Our main results concern knot complements which cover hyperbolic reflection orbifolds, especially the complements of {\it AP knots}.  A knot $K$ is an AP knot if each closed essential surface in the exterior of $K$ contains an accidental parabolic (see \S \ref{s: ap knots} for definitions). Small knots (i.e.~knots whose exteriors contain no closed essential surfaces) are AP knots, but so are {\it toroidally alternating knots} \cite{Ad}, a large class which contains, for instance, all hyperbolic knots which are alternating, almost alternating, or Montesinos.  

\begin{theorem}\label{T:AP}
If the complement $S^3 \setminus K$ of a hyperbolic AP knot $K$ covers a reflection orbifold $\mathcal{O}$, then $\mathcal{O}$ is a one-cusped tetrahedral orbifold and $K$ is either the 
figure-eight  knot or one of the dodecahedral knots.
\end{theorem}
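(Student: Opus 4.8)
The plan is to establish the theorem by combining two independent statements that match the two-part structure announced in the abstract: first, that the AP hypothesis forces the covered reflection orbifold $\mathcal{O}$ to be \emph{small}; and second, that a small hyperbolic reflection orbifold covered by a knot complement must be a one-cusped tetrahedral orbifold, covered only by the figure-eight and the dodecahedral knots. As a preliminary reduction, since $S^3 \setminus K$ covers $\mathcal{O}$, Lemma \ref{l:prelim} gives that $K$ admits hidden symmetries. Because $S^3\setminus K$ is orientable, the covering factors through the orientation double cover $\mathcal{O}^+$ of $\mathcal{O}$, an orientable one-cusped orbifold with a rigid cusp; hence its cusp cross-section is a Euclidean turnover and the cusp of $\mathcal{O}$ itself is one of the Euclidean triangle reflection orbifolds of type $(2,4,4)$, $(2,3,6)$, or $(3,3,3)$. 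Correspondingly the cusp field is $\mathbb{Q}(\sqrt{-1})$ or $\mathbb{Q}(\sqrt{-3})$, a restriction I will use to pin down the commensurability class.

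For the smallness step I argue the contrapositive. If $\mathcal{O}$ is not small it contains an essential closed $2$-suborbifold $F$. Exploiting the reflection structure, I would arrange $F$ to be totally geodesic: a reflection orbifold is assembled from mirror walls, and its essential suborbifolds can be isotoped into the geodesic (silvered) part of the decomposition. The preimage $\widetilde F \subset S^3\setminus K$ is then a closed essential totally geodesic surface, so $\pi_1(\widetilde F)$ acts as a cocompact Fuchsian group whose limit set is a round circle. Such a group contains no parabolics, so no essential curve on $\widetilde F$ can be parabolic in $\pi_1(S^3\setminus K)$; that is, $\widetilde F$ carries no accidental parabolic, contradicting the AP hypothesis. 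Hence $\mathcal{O}$ is small.

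It remains to classify and identify. Now $\mathcal{O}$ is a small, non-compact, one-cusped hyperbolic reflection orbifold whose cusp is a Euclidean triangle orbifold. Smallness sharply restricts the underlying space and the singular graph of a reflection orbifold; combined with the triangle cusp, I would show, via Andreev's theorem and the classification of small cusped hyperbolic Coxeter orbifolds, that the underlying polyhedron must be a hyperbolic Coxeter tetrahedron with a single ideal vertex, so $\mathcal{O}$ is a one-cusped tetrahedral orbifold. There being only finitely many such tetrahedra, I would then determine, for each, whether it admits a torsion-free finite-index subgroup whose quotient is a knot complement in $S^3$ — using the cusp-field constraint ($\mathbb{Q}(\sqrt{-1})$ or $\mathbb{Q}(\sqrt{-3})$), the requirement $H_1 = \mathbb{Z}$, and embeddability in $S^3$. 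Matching the survivors against the known coverings identifies $K$ as the figure-eight knot (whose group is commensurable with $\PSL_2(\mathcal{O}_3)$) or one of the Aitchison--Rubinstein dodecahedral knots.

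The crux, and the expected main obstacle, is the classification step: proving that a small one-cusped reflection orbifold with a rigid cusp is necessarily tetrahedral, and then carrying out the finite but delicate enumeration that eliminates every tetrahedral orbifold not covered by a knot complement and confirms that the survivors are covered by exactly the three known knots. The totally geodesic isotopy in the smallness argument is a secondary technical point to be handled with care, since an arbitrary essential suborbifold need not be geodesic a priori.
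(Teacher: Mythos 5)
Your two-part skeleton (the AP hypothesis forces the covered reflection orbifold to be small; a small reflection orbifold covered by a knot complement is tetrahedral and identifies $K$) matches the architecture of the paper, which proves exactly these two statements as Proposition \ref{P:small} and Theorem \ref{T:reflections}. However, both of your steps contain genuine gaps. The first and most serious is your smallness argument, which rests on the claim that an essential closed $2$-suborbifold of a hyperbolic reflection orbifold can be isotoped to be totally geodesic. This is not a theorem and there is no reason for it to hold: closed essential surfaces and $2$-suborbifolds in finite-volume hyperbolic $3$-orbifolds are generically quasi-Fuchsian, and the mirror structure does not force geodesity --- the walls constitute the singular locus of the reflection orbifold, not the closed suborbifolds in question, and they furnish no isotopy of an arbitrary essential suborbifold onto a geodesic one. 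You flag this as a ``secondary technical point,'' but it is the crux: the implication ``totally geodesic $\Rightarrow$ no accidental parabolics'' is fine, but without a geodesic representative your contradiction with the AP property never gets off the ground. The paper's Proposition \ref{P:small} avoids geometry entirely: it truncates along the turnover cusp cross-section, splits along the suborbifold, doubles the piece containing $\partial M_K$, invokes the orbifold JSJ decomposition (Euclidean turnovers are canonical), and then uses the annulus supplied by the AP hypothesis to produce a non-separating incompressible torus homologically dual to $\partial M_K$, which cannot be isotoped off $\partial M_K$ --- a contradiction.

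The second gap is that you assert, without justification, that the orientation double cover $\mathcal{O}^{+}$ has a rigid cusp. Hidden symmetries (obtained via Lemma \ref{l:prelim}, which incidentally is stated only for non-arithmetic knots) force the \emph{minimal} orientable orbifold in the class to have a rigid cusp, not an intermediate cover such as $\mathcal{O}^{+}$: a priori $\mathcal{O}^{+}$ could have cusp cross-section $S^2(2,2,2,2)$, i.e.\ $\mathcal{O}$ could be of quadrilateral type, and Lemma \ref{l:charsmall} shows that smallness alone cannot exclude this. Since your classification step takes the triangle cusp as a premise, this case must be ruled out, which the paper does by a separate argument: by \cite[\S 6.2]{Re1}, a knot-complement cover of an orientable orbifold with $S^2(2,2,2,2)$ cusp is regular and dihedral, while Lemma \ref{l: quad type} shows that no knot complement regularly covers a reflection orbifold or its orientation double cover. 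Finally, your concluding enumeration glosses over the identification of $K$: knowing that $\mathcal{O}$ is the dodecahedral tetrahedral orbifold does not by itself identify the knot; one needs Hoffman's theorem \cite{Neildodec} that the only knot complements in that commensurability class are the two dodecahedral knots, together with Reid's theorem \cite{Re1} in the arithmetic case, and these are precisely the external inputs the paper invokes in the proof of Theorem \ref{T:reflections}.
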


The cusp cross section of the orientable minimal orbifold in the commensurability class of a hyperbolic knot complement with hidden symmetries is expected to be $S^2(2,3,6)$. 

\begin{conjecture} {\rm (Rigid cusp conjecture)} \label{conj:cusp}  {\it The minimal orientable orbifold covered by a non-arithmetic knot complement with hidden symmetries has a rigid cusp of type $S^2(2,3,6)$.} 
\end{conjecture}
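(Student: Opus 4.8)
The plan is to determine the cusp cross-section of $\O_{min}(K)$ directly, starting from the characterisation of hidden symmetries recalled in the Introduction. Fix a non-arithmetic hyperbolic knot $K \subset S^3$ admitting hidden symmetries and write $\Gamma = \pi_1(S^3 \setminus K)$. By Margulis's theorem \cite{Mar} the orientation-preserving commensurator $C^+(\Gamma)$ is discrete and maximal, so $\O_{min}(K) = \mathbb{H}^3/C^+(\Gamma)$, and by the equivalences of \cite{NR1} its cusp cross-section is a rigid Euclidean $2$-orbifold. A rigid closed orientable Euclidean $2$-orbifold is a Euclidean turnover, so the cusp is one of $S^2(2,3,6)$, $S^2(2,4,4)$ or $S^2(3,3,3)$; the content of Conjecture \ref{conj:cusp} is to eliminate the square turnover $S^2(2,4,4)$ and the equilateral turnover $S^2(3,3,3)$.

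First I would record the arithmetic shadow of each case. Lift the cusp of $\O_{min}(K)$ to the torus cusp $T^2 = \mathbb{C}/\Lambda$ of $S^3 \setminus K$ and let $\Lambda_{\orb} \supseteq \Lambda$ be the translation lattice of the orbifold cusp, a finite-index overlattice. The central rotation of the turnover has order $n \in \{6,4,3\}$ and acts on $\Lambda_{\orb}$ as multiplication by a primitive $n$-th root of unity $\zeta_n$, so $\Lambda_{\orb}$ is a $\Z[\zeta_n]$-module and the cusp shape $\tau$ lies in $\mathbb{Q}(\zeta_n)$; since $\Lambda$ and $\Lambda_{\orb}$ are commensurable this gives cusp field $\mathbb{Q}(\tau) = \mathbb{Q}(\sqrt{-3})$ in the cases $S^2(2,3,6)$ and $S^2(3,3,3)$, and $\mathbb{Q}(\tau) = \mathbb{Q}(\sqrt{-1})$ in the case $S^2(2,4,4)$. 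As the cusp field is a subfield of the invariant trace field of $\Gamma$, eliminating $S^2(2,4,4)$ is exactly equivalent to proving that $\sqrt{-3}$ belongs to the cusp field of $K$, i.e. that no knot in $S^3$ with hidden symmetries can have cusp field $\mathbb{Q}(\sqrt{-1})$.

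Eliminating $S^2(2,4,4)$ is the step I expect to be the main obstacle, since it is precisely the open core of the conjecture and appears to demand a genuinely new restriction on which imaginary quadratic fields arise as cusp fields of knots. My proposed line of attack is to exploit the distinguished role of the meridian $\mu$: the whole commensurability class shares the single cusp of $\O_{min}(K)$, the meridians of the commensurable knots project there to a finite set of slopes, and the order-$4$ rotation permutes that set; tracking this orbit against the homological constraint that a knot meridian normally generates $\Gamma$ and is the unique slope yielding $S^3$ by Dehn filling, together with the horoball and cusp-volume bounds forced by the $p4$ symmetry, should confront the $S^2(2,4,4)$ geometry with the topology of $S^3$. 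Since arithmeticity forces $K$ to be the figure-eight \cite{Re1}, whose cusp is hexagonal, one might alternatively try to show that a $\mathbb{Q}(\sqrt{-1})$ cusp field drives $K$ into the commensurability class of an arithmetic link, yielding a contradiction.

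Finally, to upgrade $S^2(3,3,3)$ to $S^2(2,3,6)$ I would show that the order-$2$ symmetry of the hexagonal cusp is always realised by an element of $C^+(\Gamma)$. By maximality of the commensurator, a genuine $S^2(3,3,3)$ cusp means that no order-$6$ rotation commensurates $\Gamma$; the aim is to contradict this by producing such a hidden symmetry from the peripheral structure, for instance from the action on $H_1(T^2)$ combined with the self-commensurability machinery of \cite{BBCW1}. The two confirmed cases supply the template and the expected answer: by Theorem \ref{T:AP} the figure-eight and the dodecahedral knots cover one-cusped tetrahedral reflection orbifolds, and in each the corner reflectors of the cusp already display the $(2,3,6)$ pattern, so that the $S^2(2,3,6)$ cross-section predicted by Conjecture \ref{conj:cusp} is visible directly. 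I regard the $S^2(3,3,3)$ elimination as more accessible than the $S^2(2,4,4)$ elimination, but both currently require input beyond the formal commensurator picture.
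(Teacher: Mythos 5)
The statement you were given is Conjecture \ref{conj:cusp}; the paper does not prove it, but states it as an open conjecture, so there is no proof in the paper against which your attempt can be matched. Your write-up is, to its credit, candid about this, and its preliminary reductions are correct and standard: for non-arithmetic $K$ the commensurator is discrete by Margulis \cite{Mar}, $\O_{min}(K)$ is its quotient, hidden symmetries force a rigid cusp by \cite[Proposition 9.1]{NR1}, the only orientable rigid Euclidean $2$-orbifolds are the turnovers $S^2(2,3,6)$, $S^2(2,4,4)$, $S^2(3,3,3)$, and the rotational part of the cusp group pins the cusp field to $\mathbb{Q}(\sqrt{-3})$ in the first and third cases and $\mathbb{Q}(\sqrt{-1})$ in the second (so that, as you say, killing $S^2(2,4,4)$ is equivalent to showing no knot with hidden symmetries has cusp field $\mathbb{Q}(\sqrt{-1})$). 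But everything past that point is a research program, not an argument. For $S^2(2,4,4)$ you propose to play the orbit of meridional slopes under the order-$4$ rotation against the Gordon--Luecke uniqueness of the $S^3$-filling slope and cusp-volume bounds, yet you exhibit no actual incompatibility, and none is known. For $S^2(3,3,3)$ your plan is to manufacture an order-$6$ element of the commensurator from the peripheral structure; this runs directly into the fact that the commensurator of a non-arithmetic lattice is already maximal, so a genuine $S^2(3,3,3)$ cusp for $\O_{min}(K)$ means by definition that no such element exists, and you would need a wholly new mechanism to produce one --- which you acknowledge you do not have. So the gap is not a fixable step but the entire substantive content of the conjecture.

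Two further points worth noting. First, the paper does contain the state of the art in this direction, which your proposal does not engage: Theorem \ref{t: AP Hoffman}(2), extending Hoffman \cite{Neilsmall} via Corollary \ref{c:minimalsmall}, derives the $S^2(2,3,6)$ cusp conclusion under strong extra hypotheses (a non-arithmetic AP knot with hidden symmetries admitting two non-meridional, non-hyperbolic surgeries), and the mechanism there is arithmetic --- integral traces coming from smallness of $\O_{min}(K)$ --- rather than the slope-orbit geometry you sketch; any serious attack on the $S^2(2,4,4)$ case should start from that trace-field technology. Second, observe that the cusp-field invariant you set up is identical ($\mathbb{Q}(\sqrt{-3})$) for $S^2(3,3,3)$ and $S^2(2,3,6)$, so no commensurability-invariant arithmetic datum of this kind can separate those two cases; your instinct to seek a symmetry-based argument there is therefore the right one, but as it stands the proposal establishes only the trichotomy of possible cusps, which is already in \cite{NR1}, and nothing beyond it.
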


\noindent Here is a corollary of Theorem \ref{T:AP} that we prove in \S \ref{s: ap knots}. 

\begin{corollary}\label{c:achiral}
If the complement of an achiral, hyperbolic, AP knot $K$ covers an orientable orbifold with cusp cross section $S^2(2,3,6)$, then $K$ is the figure-eight or one of the dodecahedral knots.
\end{corollary}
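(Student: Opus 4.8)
The plan is to deduce the corollary from Theorem~\ref{T:AP} by producing a reflection orbifold covered by $S^3 \setminus K$ out of the hypotheses. First I would observe that the existence of a cover with cusp cross section $S^2(2,3,6)$ forces $K$ to admit hidden symmetries: a $(2,3,6)$-cusp is a rigid (Euclidean turnover) cusp, so by the equivalences recalled in the introduction (and \cite[Proposition~9.1]{NR1}) the knot $K$ has hidden symmetries and is in particular non-arithmetic, since the figure-eight is already covered by the conclusion and arithmetic among knots is unique to it. Thus $\O_{min}(K)$ exists as the minimal orientable orbifold in the commensurability class, and because commensurable orbifolds that both have rigid cusps must share the cusp type, $\O_{min}(K)$ itself has a $S^2(2,3,6)$ cusp.

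Next I would bring in achirality. The key point is that $K$ being achiral means $S^3 \setminus K$ admits an orientation-reversing self-homeomorphism, hence an orientation-reversing isometry by Mostow rigidity. The plan is to use this isometry to enlarge the minimal \emph{orientable} orbifold $\O_{min}(K)$ to a genuinely non-orientable orbifold $\O_{full}(K)$ in the full commensurability class that contains an orientation-reversing element. Concretely, the orientation-reversing symmetry descends to the commensurator, so $\O_{full}(K)$ is a proper quotient of $\O_{min}(K)$ by an orientation-reversing involution, i.e. $\O_{min}(K)$ is the orientation double cover of $\O_{full}(K)$.

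The heart of the argument is then to show that this orientation-reversing involution can be taken to act as a \emph{reflection}, so that $\O_{full}(K)$ is a reflection orbifold. Here the rigid cusp structure does the work: the cusp cross section $S^2(2,3,6)$ of $\O_{min}(K)$ must be preserved, and I would analyze the possible orientation-reversing isometries of the Euclidean turnover $S^2(2,3,6)$. Such a turnover has a unique (up to isotopy) orientation-reversing isometric involution, and it is a reflection fixing a circle through the cone points of orders $2$ and $6$ (the mirror of the $(2,3,6)$ triangle-group picture); it cannot be a fixed-point-free involution because $S^2(2,3,6)$ admits no free orientation-reversing involution. This forces the involution on $\O_{min}(K)$ to have a two-dimensional fixed locus, i.e. to be a reflection, so $\O_{full}(K)$ is a reflection orbifold covered by $S^3 \setminus K$.

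With a reflection orbifold in hand I would invoke Theorem~\ref{T:AP}: since $K$ is a hyperbolic AP knot whose complement covers the reflection orbifold $\O_{full}(K)$, the theorem concludes that $K$ is the figure-eight knot or one of the dodecahedral knots, which is exactly the desired conclusion. I expect the main obstacle to be the cusp-level analysis in the previous paragraph, namely rigorously ruling out a fixed-point-free orientation-reversing involution and confirming that the orientation-reversing symmetry of the turnover extends to a reflection (rather than a rotary reflection) of the whole orbifold; this requires checking that the global orientation-reversing isometry restricts compatibly to each cusp and that its fixed-point set is codimension one. The remaining steps are essentially bookkeeping with Mostow rigidity and the commensurator.
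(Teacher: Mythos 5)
Your outline follows the paper's skeleton (identify the $S^2(2,3,6)$ cusp of $\O_{min}(K)$, quotient by an orientation-reversing involution coming from achirality, analyze that involution on the turnover, then invoke Theorem~\ref{T:AP}), but the decisive step contains a genuine gap. You conclude that because the orientation-reversing involution $r$ on $\O_{min}(K)$ has two-dimensional fixed locus, the quotient $\O_{full}(K)=\O_{min}(K)/r$ ``is a reflection orbifold.'' That inference is not valid: a reflection orbifold in the sense required by Theorem~\ref{T:AP} is a quotient of $\mathbb{H}^3$ by a group \emph{generated by reflections}, whose underlying space is a polyhedron with singular set exactly its boundary. The quotient of $\O_{min}(K)$ by a reflection merely \emph{contains} a reflection in its orbifold fundamental group; in general it still has one-dimensional singular strata in its interior (the image of $\Sigma(\O_{min}(K))$ away from the mirror plane), and $\pi_1^{orb}(\O_{full}(K))$ need not be generated by reflections. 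The paper's Section~\ref{s: contain reflection} (Theorem~\ref{thm:areflection}) exists precisely because ``$\O_{min}(K)$ admits a reflection'' is strictly weaker than ``$\O_{full}(K)$ is a reflection orbifold'': the orbifolds of types (b), (c), (d) listed there admit reflections whose quotients are \emph{not} reflection orbifolds.

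The missing ingredient is Lemma~\ref{l:reflection}, whose proof genuinely uses knot-theoretic input rather than just the geometry of the involution: once the cusp cross section of $\O_{full}(K)$ is a Euclidean reflection triangle orbifold, the peripheral subgroup is generated by reflections, so the meridian is a product of reflections; since the knot group is normally generated by the meridian, and coset representatives of $\Gamma_K$ in the full commensurator can be chosen peripheral, the whole group is generated by reflections. Your cusp analysis establishes exactly the hypothesis of that lemma --- with one correction: since the cone orders $2,3,6$ are pairwise distinct, every involution fixes all three cone points, and each fixed point of an orientation-reversing involution lies on its fixed circle, so the mirror circle passes through \emph{all three} cone points, not just those of orders $2$ and $6$ (this is Remark~\ref{236 implies reflection}). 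So your argument can be repaired by citing Lemma~\ref{l:reflection} at that point; but as written, the leap from ``$r$ is a reflection'' to ``$\O_{full}(K)$ is a reflection orbifold'' leaves the appeal to Theorem~\ref{T:AP} unjustified.
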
 

The proof of  Theorem \ref{T:AP} follows from the fact (Proposition \ref{P:small}) that a reflection orbifold covered by a hyperbolic AP knot complement cannot contain a closed essential 2-suborbifold (i.e.~an orbifold-incompressible 2-suborbifold which is not parallel to the cusp cross section) together with the following result: 

\begin{theorem}   \label{T:reflections} 
Suppose that $K$ is a hyperbolic knot.  If $S^3 \setminus K$ covers a reflection orbifold $\mathcal{O}$ which does not contain a closed, essential $2$-suborbifold, then $K$ is either the figure-eight knot and arithmetic or one of the dodecahedral knots and non-arithmetic. 
\end{theorem}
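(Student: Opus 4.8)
The plan is to combine the structural constraints imposed by the ``no closed essential 2-suborbifold'' hypothesis with the known rigidity of small hyperbolic reflection orbifolds and the arithmeticity dichotomy for the figure-eight knot. Since $S^3 \setminus K$ covers the reflection orbifold $\mathcal{O}$, the orbifold is a \emph{small} hyperbolic reflection orbifold, meaning its underlying polytope has a controlled combinatorial structure. First I would observe that $K$ admits hidden symmetries by Lemma \ref{l:prelim}, and that (for non-arithmetic $K$) the orientation double cover of $\mathcal{O}$ is commensurable with the minimal orientable orbifold $\O_{min}(K)$, which must therefore have a rigid cusp: the cusp cross-section is a Euclidean turnover $S^2(2,3,6)$, $S^2(3,3,3)$, or $S^2(2,4,4)$. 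The absence of a closed essential $2$-suborbifold forces $\mathcal{O}$ to be one-cusped and to have very restricted geometry, and I expect this restriction to pin down $\mathcal{O}$ as one of a short, explicit list of tetrahedral (or closely related) Coxeter orbifolds.

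Next I would exploit the trace-field and invariant-trace-field restrictions coming from the existence of hidden symmetries: for a knot with hidden symmetries the cusp field equals the invariant trace field and is either $\mathbb{Q}(\sqrt{-1})$ or $\mathbb{Q}(\sqrt{-3})$ (this is the Neumann--Reid trace-field dichotomy, corresponding respectively to the $S^2(2,4,4)$ and $S^2(2,3,6)$/$S^2(3,3,3)$ cusp types). I would then enumerate the finitely many small hyperbolic reflection orbifolds whose orientation double cover has a cusp of one of these rigid types and whose invariant trace field is one of these two quadratic imaginary fields. This is essentially a matter of consulting the census of low-volume hyperbolic Coxeter tetrahedra and their relatives, matching the cusp data and the arithmetic data; only a handful of candidate orbifolds $\mathcal{O}$ survive.

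For each surviving candidate $\mathcal{O}$, the final step is to identify which knot complement can cover it. Because $\mathcal{O}$ is one-cusped and small, the covering $S^3 \setminus K \to \mathcal{O}$ is determined up to finite ambiguity, and I would reconstruct $S^3 \setminus K$ as a cover and check the resulting manifold against the defining feature of a knot complement in $S^3$: its first homology must be $\mathbb{Z}$ and the underlying space of its sphere-filling must be $S^3$. The figure-eight complement arises over the arithmetic candidate (cusp field $\mathbb{Q}(\sqrt{-3})$, giving the arithmetic reflection orbifold), and the dodecahedral knots arise over the non-arithmetic candidate; all other candidates must be excluded because they cannot be covered by a knot complement (typically the homology or the degree-of-cover constraints fail).

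The hard part, I expect, will be the last step: ruling out the spurious candidate orbifolds and showing that the only knot complements covering the admissible one-cusped tetrahedral orbifolds are precisely the figure-eight and the two dodecahedral knots. Matching cusp shapes and trace fields narrows the list, but verifying that a given small reflection orbifold is or is not covered by an actual \emph{knot} complement in $S^3$ — rather than merely by some one-cusped manifold — requires genuine input about the topology of the cover (homology, peripheral structure, and the reconstruction of $S^3$), and this is where the bulk of the argument will lie rather than in the trace-field bookkeeping.
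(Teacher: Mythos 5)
Your overall shape --- reduce $\mathcal{O}$ to a short explicit list of small reflection orbifolds and then decide which of them are covered by knot complements --- matches the paper, but three of your steps have genuine problems. First, you cannot conclude that the relevant cusp cross-section is a turnover just because $\O_{min}(K)$ has a rigid cusp: the orientation double cover $\TO$ of a small reflection orbifold can a priori have cusp $S^2(2,2,2,2)$ (the ``quadrilateral type'' of Lemma \ref{l:charsmall}), and $S^2(2,2,2,2)$ covers $S^2(2,4,4)$, so a flexible-cusped intermediate orbifold is perfectly compatible with a rigid-cusped minimal one. The paper excludes this case by a separate argument (Lemma \ref{l: quad type}): an $S^2(2,2,2,2)$ cusp forces the cover $S^3\setminus K\to\TO$ to be regular and dihedral by Reid, and one then derives a contradiction from the combinatorics of the singular locus. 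Your proposal has no substitute for this step. Second, your trace-field filter is stated incorrectly: for a knot with hidden symmetries it is the \emph{cusp field} that must be $\mathbb{Q}(\sqrt{-1})$ or $\mathbb{Q}(\sqrt{-3})$, not the invariant trace field --- the dodecahedral knots have invariant trace field $\mathbb{Q}(\sqrt{-3},\sqrt{5})$, so filtering a census by ``invariant trace field imaginary quadratic'' would throw away exactly the orbifold you need to keep. The paper avoids trace fields entirely here: it labels the edges of the tetrahedral singular locus directly, using only that interior vertex links are spherical and Hoffman's restriction that $H_1(\TO)$ is a quotient of $\Z/2\Z$ in the $S^2(2,3,6)$ case.

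Third, the step you correctly flag as hard --- showing that the surviving orbifolds are covered only by the figure-eight and dodecahedral knot complements --- is not something one can do by checking homology and peripheral structure of covers; it is imported wholesale from two substantial external theorems: Reid's theorem that the figure-eight is the only arithmetic knot (which disposes of the label-$3$ tetrahedral orbifold and of the $S^2(2,4,4)$ case, where the candidate turns out to be the Whitehead link orbifold and hence contains no knot complements at all), and Hoffman's classification of knot complements decomposing into regular ideal dodecahedra (which handles the label-$5$ case). Without naming and invoking these results, the argument does not close; the ``reconstruct the cover and check $H_1=\mathbb{Z}$'' plan would leave infinitely many covers to examine in the dodecahedral case.
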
 

\noindent Since the figure-eight knot is the only arithmetic knot and the dodecahedral knots are not small (\cite[Theorem 5]{Banks} and, independently,  \cite[Theorem 8]{computeclosed}), we obtain the following corollary. 

\begin{corollary} \label{not cover reflection}
No small, hyperbolic, non-arithmetic knot complement covers a reflection orbifold. 
\qed
\end{corollary}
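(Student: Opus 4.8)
The plan is to argue by contradiction, reducing everything to Theorem \ref{T:reflections}. Suppose that $K$ is a small, hyperbolic, non-arithmetic knot whose complement covers a reflection orbifold $\mathcal{O}$ via an orbifold covering $p \colon S^3 \setminus K \to \mathcal{O}$. The strategy is to verify that $\mathcal{O}$ satisfies the hypothesis of Theorem \ref{T:reflections} — that it contains no closed, essential $2$-suborbifold — and then to eliminate each of the two knots that the theorem permits.

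First I would establish this reduction. If $\mathcal{O}$ contained a closed, orbifold-incompressible $2$-suborbifold $F$ not parallel to the cusp cross section, then its preimage $p^{-1}(F)$ would be a closed essential surface in $S^3 \setminus K$. Here one invokes the standard fact that the preimage under a finite covering of an essential $2$-suborbifold is again essential: orbifold-incompressibility corresponds to $\pi_1$-injectivity, which passes to finite covers, and since the cusp of $\mathcal{O}$ lifts to the boundary torus of the knot exterior, non-parallelism into the boundary is preserved as well. The existence of such a surface contradicts the smallness of $K$, so $\mathcal{O}$ contains no closed, essential $2$-suborbifold.

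With this in hand, Theorem \ref{T:reflections} applies and forces $K$ to be either the figure-eight knot, which is arithmetic, or one of the dodecahedral knots, which are non-arithmetic. Since $K$ is assumed non-arithmetic it cannot be the figure-eight knot, so $K$ would have to be a dodecahedral knot. But the dodecahedral knots are not small (\cite[Theorem 5]{Banks}, \cite[Theorem 8]{computeclosed}), contradicting the hypothesis on $K$. Hence no such knot exists. The only point requiring genuine care is the reduction in the second paragraph: because $\mathcal{O}$ is a non-orientable reflection orbifold while $S^3 \setminus K$ is an orientable manifold, one must state precisely how orbifold-essentiality of $F$ transfers to honest essentiality of the preimage surface. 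This is routine, but it is the step where the smallness hypothesis is actually consumed.
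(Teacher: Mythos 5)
Your proposal is correct and follows essentially the same route as the paper: the paper obtains this corollary immediately from Theorem \ref{T:reflections} together with the observation (stated in \S\ref{s: ap knots}) that a small manifold cannot cover an orbifold containing a closed essential $2$-suborbifold, then rules out the figure-eight by non-arithmeticity and the dodecahedral knots by the cited results of Banks and Burton--Coward--Tillmann that they are not small. Your only addition is to spell out the routine lifting argument (preimage of an essential $2$-suborbifold is an essential surface) that the paper leaves implicit, which is fine.
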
 

In \cite{BBCW1} we proved that the complements of two hyperbolic knots without hidden symmetries are commensurable if and only if they have a common finite cyclic covering, that is, they are {\it cyclically commensurable}. Hence two commensurable hyperbolic knot complements are cyclically commensurable or admit hidden symmetries.   Another consequence of Proposition \ref{P:small} is that the minimal orientable orbifold $\O_{min}(K)$ in the commensurability class of the complement of a non-arithmetic hyperbolic  AP knot $K$ which admits hidden symmetries is small (Corollary \ref{c:minimalsmall}). This allows us to extend results of N. Hoffman on small hyperbolic knots (\cite[Theorems 1.1 and 1.2]{Neilsmall}) to the wider class of AP knots.

\begin{theorem} \label{t: AP Hoffman}
Let $K$ be a non-arithmetic hyperbolic  AP knot which admits hidden symmetries. \\ 
\noindent $(1)$ $S^3 \setminus K$ is not cyclically commensurable with any other knot complement. \\
\noindent $(2)$ If $K$ admits two non-meridional, non-hyperbolic surgeries, then $S^3 \setminus K$ admits no non-trivial symmetry and $\O_{min}(K)$ has an $S^2(2,3,6)$ cusp cross section. Further, $\mathcal{O}_{min}(K)$ does not admit a reflection. \end{theorem}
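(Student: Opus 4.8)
The plan is to reduce everything to the smallness of the minimal orientable orbifold and then to run the arguments of \cite{Neilsmall}, whose proofs are stated for small knots but invoke that hypothesis only in the form that will be available here. The first step is to apply Corollary \ref{c:minimalsmall}: since $K$ is a non-arithmetic hyperbolic AP knot admitting hidden symmetries, $\O_{min}(K)$ is small, i.e.\ it contains no closed essential $2$-suborbifold. Because $K$ has hidden symmetries and is non-arithmetic, $\O_{min}(K) = \mathbb{H}^3/\mathrm{Comm}^+(\Gamma)$ (with $\Gamma = \pi_1(S^3\setminus K)$) has a rigid cusp and $S^3\setminus K$ covers it irregularly. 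The claim is that Hoffman's proofs of \cite[Theorems 1.1 and 1.2]{Neilsmall} go through once the smallness of $K$ is replaced throughout by the smallness of $\O_{min}(K)$, so the real task is to verify that every appeal to smallness in \cite{Neilsmall} is in fact an appeal to the smallness of the minimal orbifold.

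For part (1), I would argue by contradiction. Suppose $S^3\setminus K$ is cyclically commensurable with $S^3\setminus K'$ for some knot $K'$. Since cyclic commensurability implies commensurability, and both arithmeticity and $\O_{min}$ are commensurability invariants, $K'$ is also non-arithmetic and $\O_{min}(K') = \O_{min}(K)$ is small. A common finite cyclic cover of the two torus-cusped knot complements then produces intermediate orbifolds interpolating between them and the rigid-cusped $\O_{min}(K)$; the smallness of $\O_{min}(K)$ rules out closed essential $2$-suborbifolds in this picture, so the covering relations are governed entirely by the cusps. Comparing the meridians of $K$ and $K'$ as they sit inside the rigid turnover cusp of $\O_{min}(K)$ then forces $K' = K$, which is the desired contradiction.

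For part (2), the two non-meridional, non-hyperbolic surgery slopes of $K$ descend to the rigid cusp of $\O_{min}(K)$, on which the hidden symmetries act as rotations of the Euclidean turnover. Any nontrivial symmetry of $S^3\setminus K$ would, together with these rotations, move the finitely many exceptional slopes among themselves more than two distinct non-meridional slopes can tolerate; this simultaneously forces the symmetry group of $S^3\setminus K$ to be trivial and eliminates the $S^2(2,4,4)$ and $S^2(3,3,3)$ turnovers, leaving the cusp cross section $S^2(2,3,6)$. For the final assertion, suppose $\O_{min}(K)$ admits a reflection. Then $S^3\setminus K$ covers the reflection orbifold $\O_{full}(K)$, which inherits the absence of closed essential $2$-suborbifolds from the smallness of $\O_{min}(K)$; by Theorem \ref{T:reflections} this forces $K$ to be the figure-eight knot, excluded since $K$ is non-arithmetic, or a dodecahedral knot. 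Since the dodecahedral knots do not satisfy the two-surgery hypothesis (equivalently, they are incompatible with the trivial symmetry group and $S^2(2,3,6)$ cusp just obtained), this case is impossible, so $\O_{min}(K)$ admits no reflection.

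The main obstacle I expect is the careful audit of \cite{Neilsmall} needed to confirm that smallness of $K$ is used only through smallness of $\O_{min}(K)$ — most delicately in the slope-counting of part (2), where the enumeration of rotation-invariant exceptional slopes and the elimination of the non-$(2,3,6)$ turnovers must be seen to depend only on the geometry of the minimal orbifold and its rigid cusp, and not on topological input special to small knots. A secondary point requiring care is the comparison of meridians inside the turnover cusp in part (1), and pinning down precisely how the dodecahedral knots fail the two-surgery hypothesis so as to close off the reflection case.
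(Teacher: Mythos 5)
Your overall framing coincides with the paper's: invoke Corollary \ref{c:minimalsmall} to get smallness of $\O_{min}(K)$, then extend Hoffman's arguments from small knots to AP knots. But the substance of the paper's proof is exactly the ``audit'' of \cite{Neilsmall} that you defer to the end as an expected obstacle, and your substitute sketches do not fill it. The paper's key observation is that in Hoffman's proof of \cite[Theorem 1.1]{Neilsmall} the smallness of $S^3\setminus K$ enters only through one consequence: the lattice $\Gamma_{\O_{min}(K)} \leq PSL_2(\mathbb C)$ corresponding to $\pi_1^{orb}(\O_{min}(K))$ has integral traces (non-integral traces would produce a closed essential $2$-suborbifold), so smallness of $\O_{min}(K)$ itself suffices and Hoffman's proof goes through unchanged; the same remark applied to the proof of \cite[Theorem 1.2]{Neilsmall} gives the first sentence of part (2). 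Your replacement arguments neither reproduce this nor stand on their own: for (1), ``the covering relations are governed entirely by the cusps'' does not follow from the absence of closed essential $2$-suborbifolds, and ``comparing the meridians \dots forces $K'=K$'' is an assertion, not an argument; for (2), the claim that a nontrivial symmetry would move the exceptional slopes around ``more than two distinct non-meridional slopes can tolerate'' is unsubstantiated and is not the mechanism (trace fields and arithmetic of the minimal orbifold) by which Hoffman's proof actually works. So parts (1) and the first sentence of (2) remain unproved in your write-up; the missing idea is the identification of integral traces as the sole point where smallness is used.

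The final assertion of (2) you handle essentially as the paper does: a reflection of $\O_{min}(K)$ leads, with the figure-eight excluded by non-arithmeticity, to $K$ being a dodecahedral knot, and the contradiction the paper draws is the one in your parenthetical --- the dodecahedral knots admit nontrivial orientation-preserving symmetries, contradicting the triviality of the symmetry group just established --- not any failure of the two-surgery hypothesis, which you correctly flag as something you cannot pin down. Two repairs are needed to make this part airtight: first, to pass from ``$\O_{min}(K)$ admits a reflection'' to ``$S^3\setminus K$ covers a reflection orbifold'' you must invoke Lemma \ref{l:reflection}, whose hypothesis is supplied by Remark \ref{236 implies reflection} using the $S^2(2,3,6)$ cusp cross section already obtained (an arbitrary orientation-reversing quotient of $\O_{min}(K)$ need not be generated by reflections); second, having done so you can quote Theorem \ref{T:AP} directly, since $K$ is an AP knot, rather than arguing that $\O_{full}(K)$ ``inherits'' the absence of closed essential $2$-suborbifolds from $\O_{min}(K)$, a transfer you assert but do not prove.
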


\noindent This raises the following question, which was settled positively for hyperbolic 2-bridge knot complements in \cite{RW}. 

\begin{question} If a hyperbolic AP knot complement does not admit hidden symmetries, is it the unique knot complement in its cyclic commensurability class, 
hence in its commensurability class?
\end{question}

In our final result, we show that it is still possible to obtain strong restrictions on the combinatorics of the singular locus of $\O_{min}(K)$ if we replace the hypothesis that the minimal orbifold $\O_{full}(K)$ is a reflection orbifold by the weaker assumption that the orientable minimal orbifold  $\O_{min}(K)$ admits a {\it reflection} (i.e.~an orientation reversing symmetry with a 2-dimensional fixed point set).  

\begin{theorem} \label{thm:areflection} 
Let $K$ be a non-arithmetic hyperbolic knot and suppose that $\O_{min}(K)$ does not contain an essential $2$-suborbifold. Suppose further that $\O_{min}(K)$ admits a reflection. Then the $\O_{min}(K)$ is orbifold-homeomorphic to one of the following models: \\ 

\noindent $(a)$ a one-cusped tetrahedral orbifold; \\ \\ 

\noindent $(b)$ $Y333$ with $n = 2,3,4,5:$ 
 \vskip -1.35 in \hskip 1.5 in \includegraphics[width=2.05 in]{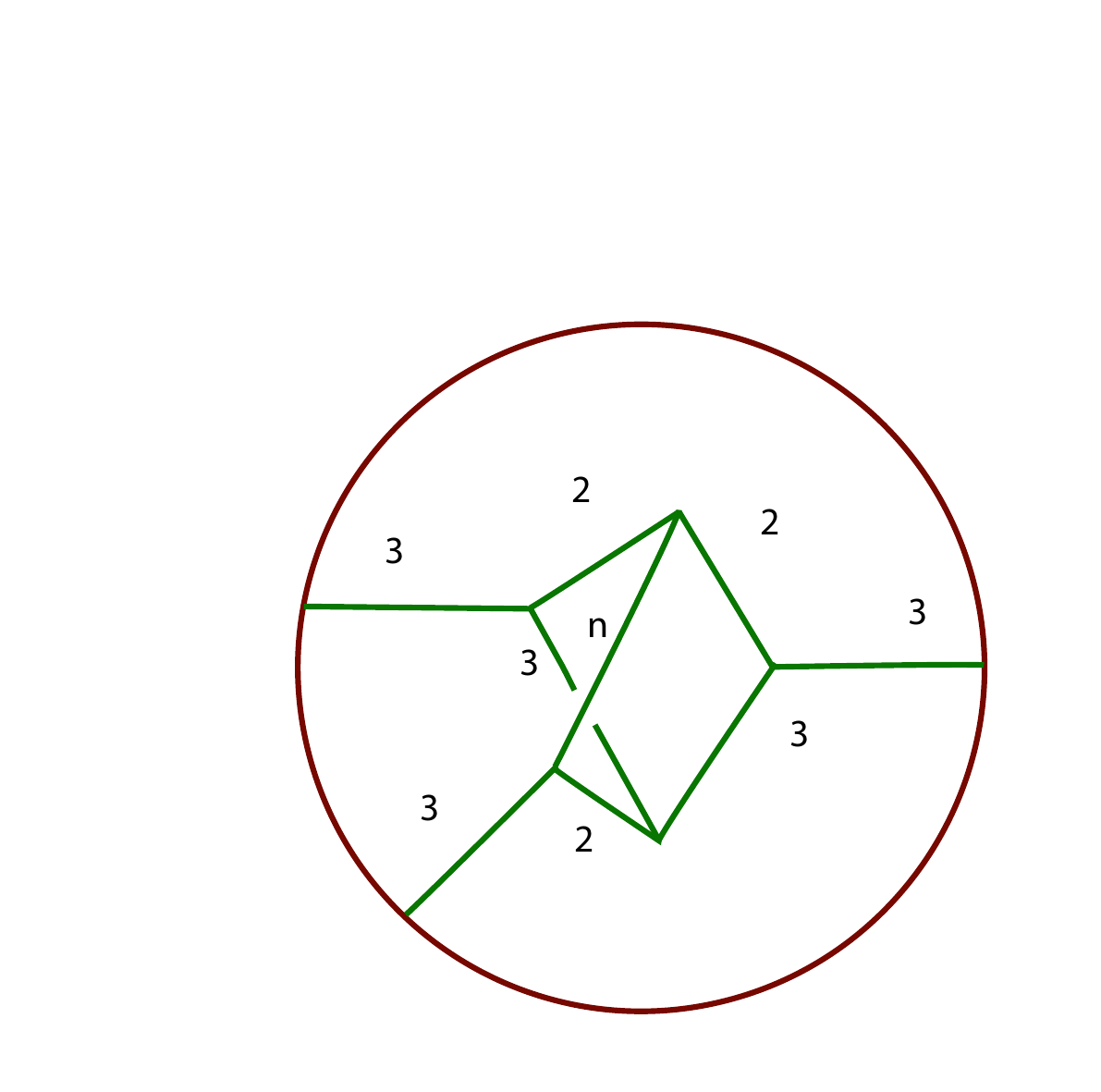} \\ \\ \\ 

\noindent $(c)$  $Y244$ with $n \geq 2:$
\vskip -2 in \hskip 1.1 in \includegraphics[width=3 in]{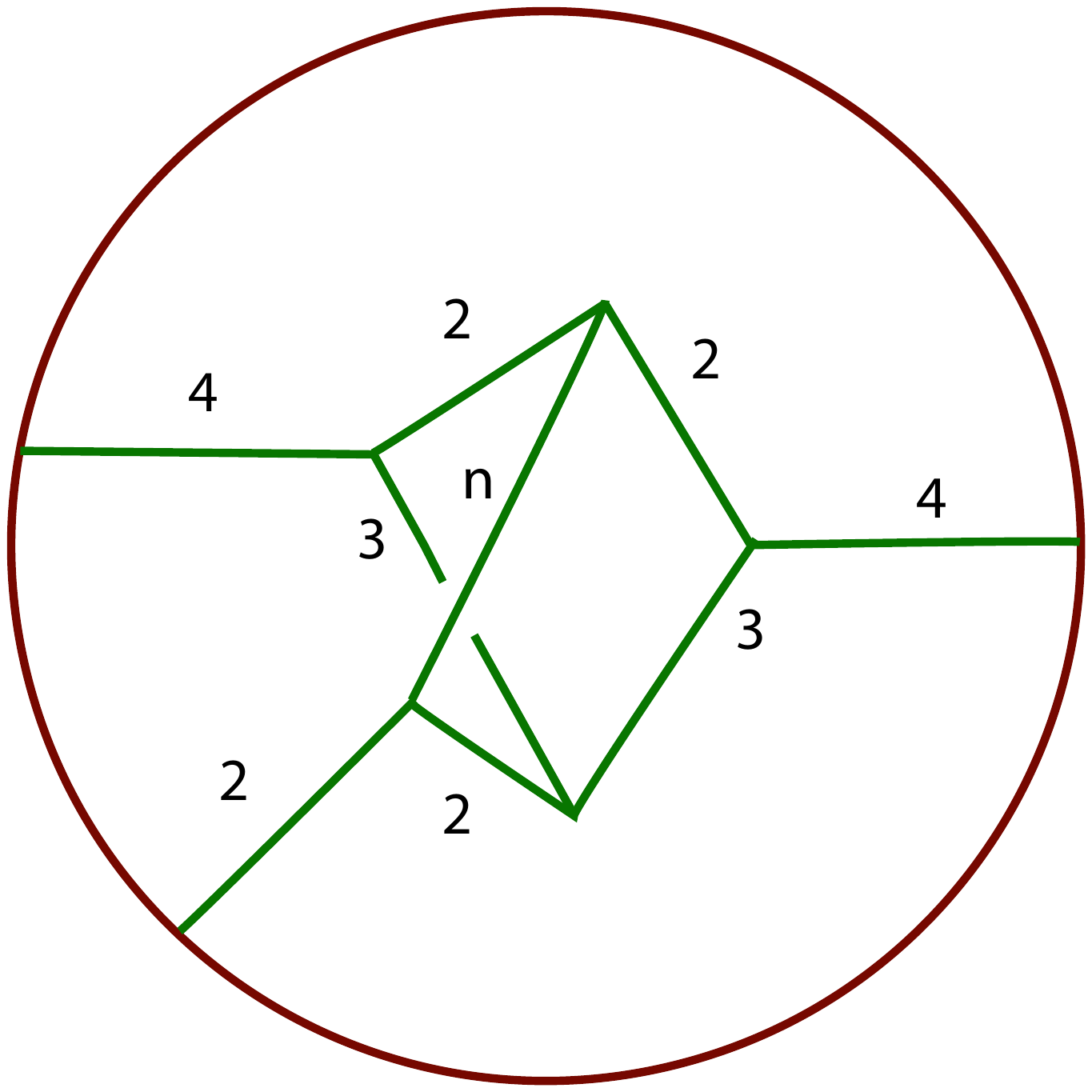} \\ \\ \\ 

\noindent $(d)$  $XO:$ 
\vskip -1.35 in \hskip 1.6 in  \includegraphics[width=2.1 in]{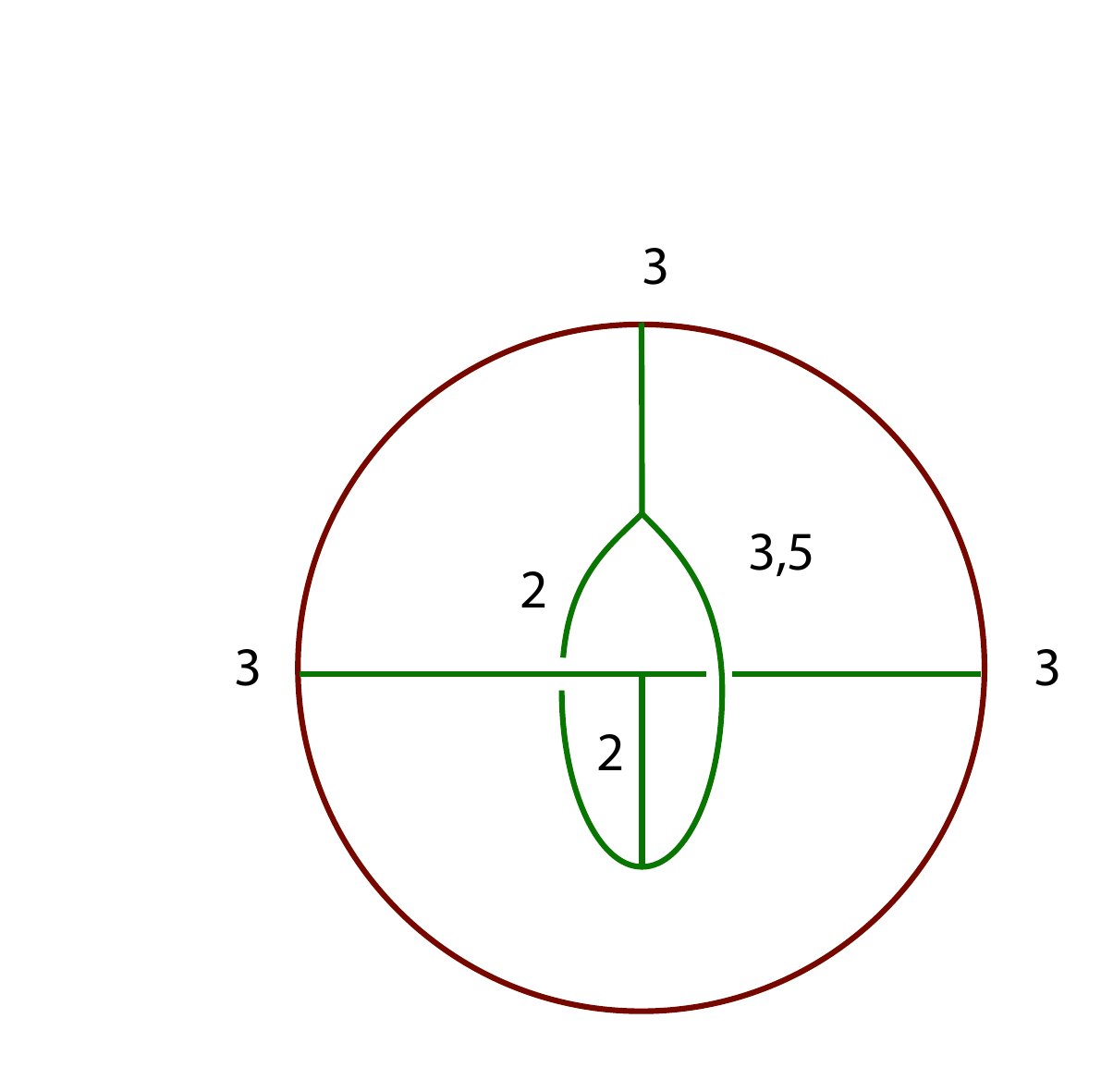}

\end{theorem}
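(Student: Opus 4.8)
The plan is to study the orientable minimal orbifold $Q = \O_{min}(K)$ directly, through the totally geodesic fixed surface of the reflection and the combinatorics of its singular locus. I first record the standing features of $Q$: it is orientable and non-arithmetic, and since $S^3\setminus K$ is one-cusped and covers $Q$, the orbifold $Q$ is itself one-cusped; by hypothesis it contains no essential $2$-suborbifold and admits a reflection $\tau$. Writing $\Sigma$ for the singular locus, which is a trivalent labelled graph in the underlying space $|Q|$, every interior vertex of $\Sigma$ must carry a spherical triangle label, i.e.~one of $(2,2,n),(2,3,3),(2,3,4),(2,3,5)$, while the three strands running out the cusp carry the labels of the Euclidean turnover forming the cusp cross-section.

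First I would pin down the cusp. Being orientation-reversing and fixing the radial direction into the unique cusp, $\tau$ restricts to an orientation-reversing isometry of the Euclidean cusp $2$-orbifold $E$. The fixed locus $F=\mathrm{Fix}(\tau)$ is a properly embedded totally geodesic $2$-suborbifold, hence orbifold-incompressible and boundary-incompressible; since $Q$ is small, $F$ cannot be closed and essential, so $F$ must run out the cusp and $\gamma = F\cap E$ is a nonempty geodesic $1$-suborbifold of $E$. I would then rule out the non-rigid cusps $T^2$ and $S^2(2,2,2,2)$ by showing that in those cases the totally geodesic $F$ either closes up to a closed essential suborbifold or forces a second cusp, contradicting smallness or one-cuspedness; this leaves $E\in\{S^2(2,3,6),S^2(2,4,4),S^2(3,3,3)\}$, so the cusp is rigid and the three cusp strands of $\Sigma$ are labelled $(2,3,6)$, $(2,4,4)$ or $(3,3,3)$.

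Next I would exploit the reflection globally by cutting. The surface $F$ separates $Q$ into two isometric halves exchanged by $\tau$, each a copy of the reflector orbifold $P=Q/\tau$, whose boundary is the mirror $\overline{F}$ and whose singular locus is $\overline{F}$ together with the image of $\Sigma$. Smallness of $Q$ passes to control of $P$, and the interaction of the geodesic $1$-suborbifold $\gamma$ with the cone points of the rigid cusp $E$ forces $\overline{F}$ to meet $\Sigma$ along the cusp strands in one of a short list of local patterns. The core of the argument is then a finite combinatorial classification: starting from the allowed cusp strand labels and the requirement that every interior vertex be spherical, I would enumerate the ways the trivalent graph $\Sigma$ can close up consistently with the mirror $\overline{F}$, repeatedly using orbifold-incompressibility to discard any configuration producing a closed essential $2$-suborbifold (for instance a sphere-with-cone-points or a Euclidean turnover separating part of $\Sigma$). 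The varying labels $n$ in models $(b)$ and $(c)$ are precisely the surviving spherical-vertex labels $(2,3,n)$ with $n\le 5$ and dihedral labels $(2,2,n)$ with $n\ge 2$, and the rigid-cusp trichotomy distributes the surviving graphs among the one-cusped tetrahedral orbifold and the families $Y333$, $Y244$ and $XO$.

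Finally I would discard any configuration that cannot be realised by a non-arithmetic $K$: a candidate that is arithmetic is eliminated by hypothesis, and one that cannot support a complete finite-volume hyperbolic structure with the prescribed singular data is eliminated on geometric grounds, leaving exactly the models $(a)$--$(d)$. The main obstacle I anticipate is this global combinatorial step: translating ``no essential $2$-suborbifold'' into effective local restrictions on the trivalent graph $\Sigma$, and then carrying out the enumeration without omission, since smallness is a statement about \emph{all} embedded $2$-suborbifolds and must be checked against every potential sphere-with-cone-points and Euclidean turnover arising in each candidate graph. Controlling this enumeration --- rather than the cusp analysis, which is comparatively rigid --- is where the real effort lies.
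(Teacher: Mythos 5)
There is a genuine gap, and it comes in two parts. First, your route to cusp rigidity does not work. You propose to rule out $T^2$ and $S^2(2,2,2,2)$ cusps by a purely geometric analysis of the fixed surface $F$ of the reflection (``$F$ closes up or forces a second cusp''), but the statement you would need --- that a small, one-cusped, orientable hyperbolic orbifold with flexible cusp cannot admit a reflection --- is false in that generality: the orientation double cover of a small one-cusped reflection orbifold of quadrilateral type (these exist; cf.\ Lemma \ref{l:charsmall}) is small, one-cusped, has cusp cross section $S^2(2,2,2,2)$, and admits a reflection whose fixed surface runs out the cusp without closing up or creating a second cusp. What actually forces the rigid cusp is the hypothesis, which your argument never uses at this step, that $\O_{min}(K)$ is covered by the complement of a knot in $S^3$: no reflection can normalize $\Gamma_K$ (else $K$ would be invariant under a reflection of $S^3$ in a $2$-sphere and hence composite), so the reflection gives a hidden symmetry (Lemma \ref{l:prelim}), whence $\O_{min}(K)$ has a rigid cusp by \cite[Proposition 9.1]{NR1} and --- crucially for what follows --- underlying space a ball by \cite[Corollary 4.11]{BBCW1}. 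It is this same arithmetic-of-the-commensurator input, in the form of Lemma \ref{l: quad type}, that eliminates the pillowcase cusp for orbifolds covered by knot complements; smallness alone cannot do it.

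Second, the heart of the theorem --- producing the finite list (a)--(d) --- is left as an intention, and you lack the two tools that make the enumeration terminate. The paper works inside the disc $P$ in which the reflection plane meets $\O_{min}(K)^{tr}$ (this requires knowing $|\O_{min}(K)^{tr}|$ is a ball, which you never establish), and its engine is the barrier lemma (Lemma \ref{l:barrier}): two disjoint $1$-cycles rel boundary in the singular set separated by a $2$-sphere force an orbifold-incompressible $2$-suborbifold. Concretely, the paper first splits into the case where the reflection $r$ leaves all three peripheral edges invariant --- then Lemma \ref{l:reflection} makes $\mathcal{O}_{full}(K)$ a reflection orbifold and Theorem \ref{T:reflections} gives case (a) --- and the case where $r$ preserves exactly one peripheral edge, which pins the cusp to $S^2(3,3,3)$ or $S^2(2,4,4)$; cutting along a regular neighborhood of $P \cup \partial\O_{min}(K)^{tr}$ then yields two orbi-balls swapped by $r$, whose boundaries are shown (by compressing-disc arguments using the absence of essential $2$-suborbifolds) to carry only two or three cone points, and the barrier lemma bounds the trees and cycles of $\Sigma \cap P$ so severely that only the graphs in (a)--(d) survive, with the labels determined by sphericity of vertex links and the fact that the minimal orbifold admits no orientation-preserving symmetry. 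Without the ball structure and the barrier lemma, your proposed enumeration over ``all ways the trivalent graph can close up'' is not finite and has no effective local restrictions to run on; this is exactly the obstacle you flag, but flagging it does not fill it.
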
 

\begin{question}
Are any of the orbifolds listed above as type $(b)$, $(c)$, or $(d)$ in Theorem \ref{T:reflections} covered by a knot complement? 
\end{question}

\begin{conjecture} No. \end{conjecture}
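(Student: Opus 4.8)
The plan is to reduce the statement to an enumeration of small, orientable, one-cusped hyperbolic $3$-orbifolds that admit a reflection, and then to carry that enumeration out using the structure imposed by a rigid cusp together with Andreev-type hyperbolicity conditions. First I would fix notation: write $\O_{min}(K) = \mathbb{H}^3/\Gamma$ with $\Gamma < \PSL(2,\mathbb{C})$ discrete and orientation-preserving. Since the finite cover $S^3 \setminus K \to \O_{min}(K)$ is surjective and $S^3 \setminus K$ has a single cusp, $\O_{min}(K)$ is one-cusped, so its cusp cross-section $E$ is an orientable Euclidean $2$-orbifold, i.e.\ one of $T^2$, $S^2(2,2,2,2)$, or a turnover $S^2(p,q,r)$ with $1/p+1/q+1/r = 1$. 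The reflection $\tau$ is an orientation-reversing isometric involution whose fixed locus $F$ is a properly embedded, totally geodesic $2$-suborbifold. Being totally geodesic, $F$ is orbifold-incompressible, and being hyperbolic it cannot be parallel to the Euclidean cusp cross-section; hence if $F$ were closed it would be essential, contradicting the hypothesis. So $F$ must exit the cusp, $\tau$ preserves the cusp and restricts to an orientation-reversing involution $\tau_E$ of $E$ with $1$-dimensional fixed set, and I set $Q = \O_{min}(K)/\langle\tau\rangle$, a one-cusped orbifold carrying $F$ as a reflector wall with $\O_{min}(K) = DQ$ its orientation double.

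The second step is to show the cusp is rigid, i.e.\ that $E$ is a turnover, by ruling out $E = T^2$ and $E = S^2(2,2,2,2)$. In each non-rigid case $F$ meets $E$ in the fixed geodesic $1$-suborbifold of $\tau_E$, and I would argue that the resulting configuration either produces an essential Euclidean (or low-complexity spherical) $2$-suborbifold of $\O_{min}(K)$, contradicting smallness, or forces $\O_{min}(K)$ into the commensurability class of an arithmetic orbifold, contradicting the non-arithmeticity of $K$. Once $E$ is a turnover $S^2(p,q,r)$, the knot $K$ has hidden symmetries and the singular locus $\Sigma$ of $\O_{min}(K)$ is a trivalent graph with exactly one ideal vertex, of type $(p,q,r) \in \{(2,3,6),(2,4,4),(3,3,3)\}$, at the cusp.

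The third step is the structural analysis of $Q$ followed by the enumeration. The reflector wall $F$ splits $\O_{min}(K)$ into two copies of $Q$; since $\O_{min}(K)$ is small, so is $Q$, which is then a small hyperbolic orbifold with one ideal vertex, a reflector boundary whose cusp link is the Euclidean triangle orbifold $\Delta(p,q,r)$, and an interior cone-axis graph coming from the part of $\Sigma$ not lying on $F$. Its underlying space is a polyhedron with a single ideal vertex, some faces being reflectors and some edges being genuine cone edges; here smallness is used to bound the number of faces and to force the cone-axis graph to reduce to a single trivalent piece (a ``$Y$''), since any extra handle, vertex, or separating Euclidean or spherical turnover would yield an essential $2$-suborbifold. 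Given this bounded combinatorial type, Andreev's theorem together with the Euclidean ideal-vertex condition and the requirement that $DQ$ be orientable pins down the admissible dihedral angles and cone orders, yielding the one-cusped tetrahedral orbifolds of type $(a)$ and the three mixed families $Y333$, $Y244$, and $XO$; the angle inequalities then cut the free edge order to $n \in \{2,3,4,5\}$ for $Y333$ and $n \ge 2$ for $Y244$, while $XO$ is rigid.

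\emph{Main obstacle.} I expect the crux to be the enumeration of these ``mixed'' orbifolds carrying both reflector walls and cone axes, which is the genuinely new phenomenon beyond Theorem \ref{T:reflections} (that result handled pure reflection orbifolds and produced only the tetrahedral type). Controlling the interaction of the cone-axis graph $\Sigma$ with the reflector wall $F$ under the single smallness hypothesis — proving the graph collapses to one trivalent vertex and that the polyhedron has the claimed few faces — is the delicate combinatorial-geometric heart of the argument. The exclusion of the non-rigid cusps $T^2$ and $S^2(2,2,2,2)$ is the other point requiring care, since a reflection is a priori compatible with either, so that step must genuinely invoke non-arithmeticity and the absence of essential Euclidean suborbifolds.
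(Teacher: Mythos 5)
There is a fundamental mismatch between what you proved and what was asked. The statement in question is the conjecture answering the question ``Are any of the orbifolds listed above as type $(b)$, $(c)$, or $(d)$ \dots covered by a knot complement?'' --- that is, the assertion that \emph{none} of the orbifolds $Y333$ ($n=2,3,4,5$), $Y244$ ($n\geq 2$), or $XO$ is covered by the complement of a knot in $S^3$. (The paper states this as an open conjecture and offers no proof.) Your proposal instead sketches a proof of the classification result, Theorem \ref{thm:areflection}: starting from a non-arithmetic knot $K$ whose small orbifold $\O_{min}(K)$ admits a reflection, you rule out non-rigid cusps, analyze the quotient $Q$ by the reflection, and enumerate the possible combinatorial types, \emph{arriving at} the list $(a)$--$(d)$. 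That is precisely the theorem the paper already proves, and deriving the list is logically the opposite of what the conjecture demands: the conjecture asks you to take the explicitly described orbifolds of types $(b)$, $(c)$, $(d)$ --- concrete trivalent singular graphs with specified edge labels and cusp cross-section $S^2(3,3,3)$ or $S^2(2,4,4)$ --- and show that no finite cover of any of them is homeomorphic to a knot complement. Your outline contains no step of this kind; its conclusion (``yielding \dots the three mixed families $Y333$, $Y244$, and $XO$'') would in fact be vacuous as a proof of the conjecture, since exhibiting these families as possibilities says nothing about whether they are realized.

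A genuine attack on the conjecture would require tools entirely absent from your sketch: obstructions to being covered by a knot complement applied to these specific orbifolds. For example, the paper's proof of Theorem \ref{T:reflections} uses Hoffman's homological restriction (\cite[Proposition 4.1]{Neilsmall}) that an orbifold with $S^2(2,3,6)$ cusp covered by a knot complement has $H_1$ a quotient of $\Z/2\Z$; one would need the analogous restrictions for the $S^2(3,3,3)$ and $S^2(2,4,4)$ cusp types occurring in $(b)$, $(c)$, $(d)$, or else trace-field/arithmetic-invariant computations, or an analysis showing that $\pi_1^{\orb}$ of these orbifolds contains no finite-index subgroup that is a knot group (normally generated by a peripheral element of the right type). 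Moreover, since $(b)$ and $(c)$ are infinite families parametrized by $n$, any such argument must be uniform in $n$. None of this is addressed, so the proposal leaves the actual statement untouched.
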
 

Here is how the paper is organized. In section \ref{general} we show that the presence of a reflection in the commensurability class of the complement of a hyperbolic knot $K$ implies that $K$ admits hidden symmetries (Lemma \ref{l:prelim}). Then we introduce the notion of a reflection orbifold and prove that for a non-arithmetic knot $K$, $\O_{full}(K)$ is a hyperbolic reflection orbifold as long its cusp cross section is a Euclidean reflection orbifold (Lemma \ref{l:reflection}). We also show that no hyperbolic knot complement covers a reflection orbifold regularly (Lemma \ref{l: quad type}(1)). In section \ref{s: ap knots} we consider hyperbolic AP knots whose complements cover reflection orbifolds (Theorem \ref{T:AP}) and show that if such a knot admits hidden symmetries, then it cannot be cyclically commensurable with another knot (Theorem \ref{t: AP Hoffman}).  In section 
\ref{s: kc&reflection orbifolds} we determine those knots whose complements cover small hyperbolic reflection orbifolds (Theorem \ref{T:reflections}).  
 Finally in section \ref{s: contain reflection} we study the combinatorics of the minimal orbifolds in the commensurability classes of knot complements where the orientable commensurator quotient admits a reflection, but whose full commensurator quotient is not a reflection orbifold (Theorem \ref{thm:areflection}). \\

\noindent {\bf Acknowledgements}.  The work presented here originated during a Research in Pairs program hosted by the Mathematisches Forschungsinstitut Oberwolfach. The authors would like to thank the institute for its hospitality.

\section{Knot complements and reflections} \label{general} 

\noindent In this section we prove some general results about knots whose complements have reflections in their commensurability classes.  For example, a hyperbolic amphichiral knot which admits hidden symmetries always has a reflection in its commensurability class.  This is because the orientable commensurator quotient has a cusp cross section with three cone points, and therefore must admit a reflection. We assume that the reader is familiar with the terminology and notation from \cite{BBCW1} and refer to \cite{Walshsurvey} for background information on orbifolds and how they relate to the knot commensurability problem. 

If $K \subset S^3$ is a hyperbolic knot, there is a discrete subgroup $\Gamma_K \leq PSL_2(\mathbb C)$, unique up to conjugation in $\hbox{Isom}(\mathbb H^3)$, such that $\pi_1(S^3 \setminus K) \cong \Gamma_K$. We use $M_K$ to denote the exterior of $K$. 

\begin{lemma} \label{l:prelim}
Suppose a non-arithmetic hyperbolic knot complement $S^3 \setminus K = \mathbb{H}^3/\Gamma_K$  is commensurable with an orbifold $\mathcal{O}$ which admits a reflection symmetry. Then, 

\noindent $(1)$ $S^3 \setminus K$ admits hidden symmetries. 

\noindent $(2)$ $S^3 \setminus K$ contains a (possibly immersed) totally geodesic surface. 

\end{lemma}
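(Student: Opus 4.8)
The plan is to work entirely inside the commensurator. Write $\text{Comm}(\Gamma_K)$ for the commensurator of $\Gamma_K$ in $\text{Isom}(\mathbb{H}^3)$. Since commensurable groups have equal commensurators, and a reflection $r$ of $\mathcal{O}=\mathbb{H}^3/\Lambda$ normalizes $\Lambda$, the hypothesis supplies an orientation-reversing involution $r\in\text{Comm}(\Lambda)=\text{Comm}(\Gamma_K)$ whose fixed-point set $P=\text{Fix}(r)\subset\mathbb{H}^3$ is a totally geodesic plane. As $\Gamma_K$ is non-arithmetic, $\text{Comm}(\Gamma_K)$ is discrete and $\mathcal{O}_{full}(K)=\mathbb{H}^3/\text{Comm}(\Gamma_K)$. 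Note that $r\notin\Gamma_K$ since $\Gamma_K\leq\text{PSL}_2(\mathbb{C})$ is orientation-preserving.

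For part $(2)$ I would pass to the finite-index subgroup $\Gamma_0=\Gamma_K\cap r\Gamma_K r^{-1}$, which is normalized by $r$. The manifold $M_0=\mathbb{H}^3/\Gamma_0$ finitely covers $S^3\setminus K$ and carries $r$ as an isometric reflection, so its fixed-point set is a finite-area totally geodesic surface $\Sigma_0=P/\text{Stab}_{\Gamma_0}(P)$; finiteness of area shows $\text{Stab}_{\Gamma_0}(P)$ is a non-elementary (lattice) Fuchsian group, and nonemptiness is automatic since a reflection fixes a $2$-dimensional set. Projecting $\Sigma_0$ under $M_0\to S^3\setminus K$ yields the asserted possibly immersed totally geodesic surface. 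This step is routine; the use of $\Gamma_0$ (rather than $\text{Stab}_{\Gamma_K}(P)$ directly) is what guarantees that $P$ descends to a finite-area surface.

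For part $(1)$ I would invoke the Neumann--Reid criterion \cite[Proposition 9.1]{NR1}: for a non-arithmetic knot, hidden symmetries are present if and only if $S^3\setminus K\to\mathcal{O}_{full}(K)$ is irregular, equivalently the cusp cross-section of $\mathcal{O}_{min}(K)$ is a rigid Euclidean orbifold, i.e. a turnover $S^2(2,3,6)$, $S^2(2,4,4)$, or $S^2(3,3,3)$. I would argue the contrapositive. If $K$ had no hidden symmetries then $\text{Comm}(\Gamma_K)=N(\Gamma_K)$, so $r$ would descend to a reflection symmetry of $S^3\setminus K$; by Mostow--Prasad rigidity together with the fact that the complement determines the knot, $r$ is realized by an orientation-reversing involution of $(S^3,K)$ whose fixed $2$-sphere $F$ meets $K$ in $2n\geq 4$ points (the count forced by $\chi(\Sigma)<0$) and cuts $M_K$ along the $r$-invariant totally geodesic planar surface $\Sigma=F\cap M_K$, all of whose boundary slopes are meridional. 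The remaining task is to exhibit a contradiction. Here I would exploit that every symmetry of a knot complement preserves the meridian slope, so absent hidden symmetries the orientation-preserving symmetry group acts on the cusp torus preserving a primitive slope, forcing the cusp of $\mathcal{O}_{min}(K)$ to be $T^2$ or $S^2(2,2,2,2)$ and never a turnover; one must then show that the geometry of the reflection plane $P$ at the cusp, recorded by the meridional geodesics $\partial\Sigma$ and the induced Euclidean reflection on the cusp cross-section, in fact forces a rigid (turnover) cusp, contradicting meridian-preservation. Establishing this last incompatibility --- equivalently, ruling out a reflection-invariant totally geodesic meridional planar surface in a non-arithmetic knot complement without hidden symmetries --- is the main obstacle, and I expect it to require a careful slope analysis at the cusp rather than any soft argument.
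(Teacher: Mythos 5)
Your part (2) is correct and is essentially verbatim the paper's argument: since $r^2=1$, the finite-index subgroup $\Gamma_K\cap r\Gamma_K r^{-1}$ is normalized by $r$, the induced isometric involution of the corresponding finite cover $M_0$ has totally geodesic fixed-point set containing the image of $P$, and this projects to a (possibly immersed, possibly cusped) totally geodesic surface in $S^3\setminus K$.

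Part (1), however, has a genuine gap, and you concede it yourself: you correctly reduce to showing that no reflection can normalize $\Gamma_K$ --- equivalently, that no hyperbolic knot $K$ is invariant under a reflection of $S^3$ through a $2$-sphere $F$ --- but then declare the contradiction to be ``the main obstacle'' and propose a slope analysis at the cusp. No such analysis is needed, and your intermediate claim that $F$ meets $K$ in $2n\ge 4$ points is in fact impossible: $F\cap K$ is the fixed-point set of the involution $r|_K$ of the circle $K$, which consists of $0$ or $2$ points (it cannot be all of $K$, since a knot lying on a $2$-sphere is unknotted). This cap of two is precisely the contradiction you were looking for, in either of two ways. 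Geometrically: with $0$ or $2$ punctures, $\Sigma=F\cap M_K$ is a sphere or an annulus, whereas the fixed-point set of an isometric involution of a finite-volume hyperbolic manifold is a finite-area totally geodesic, hence hyperbolic, surface, so $\chi(\Sigma)<0$. Topologically (the paper's route): $K$ cannot be disjoint from $F$, since $r$ exchanges the two balls bounded by $F$, so $F$ meets $K$ in exactly two points and splits it into two mirror-image arcs, both knotted because $K$ is not the unknot; hence $K$ is a nontrivial connected sum, contradicting the primeness of hyperbolic knots. One further point you elide: having produced an orientation-reversing element of $C(\Gamma_K)\setminus N(\Gamma_K)$, concluding ``hidden symmetries'' in the orientation-preserving sense required by Proposition 9.1 of \cite{NR1} requires the bridge from a hidden reflection to hidden symmetries; the paper supplies this by citing Theorem 7.2 of \cite{BBCW1}, and your step ``no hidden symmetries implies $\mathrm{Comm}(\Gamma_K)=N(\Gamma_K)$'' silently assumes it.
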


\begin{proof} 
First we prove (1). Let $r$ be the reflection symmetry in $\mathcal{O}$. Then $r$ is contained in the full commensurator $C(\Gamma_K) \leq \hbox{Isom}(\mathbb H^3)$ of $\pi_1(S^3 \setminus K) \cong \Gamma_K \leq PSL_2(\mathbb C)$. We claim that the full normalizer $N(\Gamma_K) \leq \hbox{Isom}(\mathbb H^3)$ of $\Gamma_K$ is a proper subgroup of $C(\Gamma_K)$. 
Otherwise, if the full commensurator is the full normalizer, then $r \Gamma_K r^{-1} = \Gamma_K$.  But no hyperbolic knot complement is normalized by a reflection.  This is because the normalizer quotient $N(\Gamma_K)/\Gamma_K$ is the group of isometries $Isom(S^3, K)$ and therefore $K$ can be isotoped so as to be invariant under a reflection in a $2$-sphere in $S^3$.  Since the non-trivial knot cannot be contained in the reflection sphere, it must meet the sphere in two points, which implies it is a connect sum.  Therefore, the knot complement admits a hidden reflection, and by \cite[Theorem 7.2]{BBCW1}, the knot admits hidden symmetries. This proves (1). 

For (2), suppose there is a reflection $\tau$ in $\pi_1^{orb}(\mathcal{O}_{full})$ though a plane $P$ in $\mathbb{H}^3$. Then $\Gamma_K \cap \tau \Gamma_K \tau^{-1}$ is a finite index subgroup of $\Gamma_K$which is normalized by $\tau$, as $\tau^2 =1$.  Thus $M_\tau = \mathbb{H}^3 / (\Gamma_K \cap \tau \Gamma_K \tau^{-1})$  admits a reflection and the fixed point set of this reflection is a totally geodesic surface.  As $M_\tau$ is a finite-sheeted cover of $S^3 \setminus K$, $S^3 \setminus K$ contains a totally geodesic surface.  This surface may be immersed and may have cusps. 
\end{proof} 

Here are some definitions needed for our next result. 

A  {\it convex hyperbolic polyhedron} is the intersection of a finite number of half-spaces in $\mathbb H^n$. 

A {\it hyperbolic reflection group} is a discrete subgroup of $\Isom(\mathbb H^n)$ generated by a finite number of reflections. 

Each hyperbolic reflection group $\Gamma \leq \Isom(\mathbb H^n)$ has a fundamental domain which is a convex hyperbolic polyhedron $P \subseteq \mathbb H^n$. Further, $\Gamma$ is generated by reflections in the faces of $P$. The quotient orbifold $\mathcal{O}_\Gamma = \mathbb{H}^n/\Gamma$ has underlying space $P$ and singular set $\partial P$. The faces of $P$ are reflector planes. See \cite[Theorem 2.1]{DOL} for a proof of these assertions. 

A {\em hyperbolic reflection orbifold} is an orbifold $\mathcal{O}$ associated to the quotient of $\mathbb{H}^n$ by a hyperbolic reflection group. 

{\it Euclidean reflection orbifolds} are defined similarly. We leave the details to the reader.

The orientation double cover $\TO$ of a one-cusped reflection $n$-orbifold $\mathcal{O}$ is obtained by doubling $\mathcal{O}$ along its reflector faces. Hence the interior of $\TO$ has underlying space $\mathbb R^n$ and singular set contained in a properly embedded hyperplane, which is the fixed point set of the reflection determined by the cover $\TO \to \mathcal{O}$. In the case of a $3$-dimensional reflection orbifold, the singular set of $\TO$ is a trivalent ``graph" with a finite number of vertices and a finite number of edges which are either lines or half-lines properly embedded in $\TO$, or compact intervals whose endpoints are vertices. (There are no circular edges since each loop contains a vertex.) 

\begin{lemma} \label{l:reflection} 
Let $K$ be a non-arithmetic hyperbolic knot. Suppose that the cusp cross section of the full commensurator quotient $\mathcal{O}_{full}(K)$ of $S^3 \setminus K$ is a Euclidean reflection 2-orbifold. Then $\mathcal{O}_{full}(K)$ is a hyperbolic reflection orbifold. 
\end{lemma}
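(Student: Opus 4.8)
The plan is to show that the full commensurator $G := C(\Gamma_K) \leq \Isom(\mathbb H^3)$, for which $\mathcal{O}_{full}(K) = \mathbb H^3/G$, is generated by reflections; by \cite[Theorem 2.1]{DOL} this is exactly what it means for $\mathcal{O}_{full}(K)$ to be a hyperbolic reflection orbifold. First I would record the standing structure: since $K$ is non-arithmetic, $G$ is discrete (Margulis \cite{Mar}) and $\mathcal{O}_{full}(K)$ is the minimal orbifold in the commensurability class, a finite-volume orbifold with a single cusp whose cross section is, by hypothesis, a Euclidean reflection $2$-orbifold. Fixing a parabolic fixed point $\xi \in \partial \mathbb H^3$ of $\Gamma_K$ with stabilizer $G_\xi \leq G$, the group $G_\xi$ acts faithfully on a horosphere centred at $\xi$ as the Euclidean reflection group whose quotient is that cusp cross section. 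Hence $G_\xi$ is generated by reflections of $\mathbb H^3$ in the vertical planes through $\xi$ lying over the mirror lines; in particular $G$ contains reflections.

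Next I would set $R \leq G$ to be the subgroup generated by \emph{all} reflections contained in $G$. A conjugate of a reflection is a reflection, so $R \trianglelefteq G$, and the first step gives $G_\xi \leq R$. By \cite[Theorem 2.1]{DOL} the quotient $\mathbb H^3/R$ is a reflection orbifold, so it suffices to prove $R = G$. The crux is the claim $\Gamma_K \leq R$. Its proof combines two observations: the full peripheral subgroup $\langle \mu, \lambda\rangle \cong \mathbb Z^2$ of $\Gamma_K$ lies in $G_\xi \leq R$, so in particular the meridian $\mu$ lies in $R$; and $\Gamma_K = \pi_1(S^3 \setminus K)$ is normally generated by a meridian (filling along $\mu$ yields the simply connected manifold $S^3$). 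Since $R \trianglelefteq G \supseteq \Gamma_K$, every conjugate $\gamma \mu \gamma^{-1}$ with $\gamma \in \Gamma_K$ lies in $R$, and therefore the normal closure of $\mu$ in $\Gamma_K$, which is all of $\Gamma_K$, lies in $R$.

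Finally I would finish by a cusp count. Because $G_\xi \leq R$ and $R \trianglelefteq G$, one has $R g G_\xi = R g$ for all $g \in G$, so the single cusp of $\mathcal{O}_{full}(K)$ lifts to exactly $[G : R]$ cusps of $\mathbb H^3/R$ (the $R$-orbits inside the $G$-orbit of $\xi$ being in bijection with the cosets $R \backslash G$). On the other hand $\Gamma_K \leq R$ exhibits $S^3 \setminus K = \mathbb H^3/\Gamma_K$ as a finite cover of $\mathbb H^3/R$, and a finite cover has at least as many cusps as its base; as $S^3 \setminus K$ has a single cusp, $\mathbb H^3/R$ has exactly one cusp. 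Thus $[G : R] = 1$, i.e. $R = G$, so $G$ is generated by reflections and $\mathcal{O}_{full}(K)$ is a hyperbolic reflection orbifold.

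I expect the main obstacle to be the identification $\Gamma_K \leq R$: the decisive point is to notice that the reflection hypothesis on the cusp forces the meridian into the reflection subgroup $R$, after which the topological fact that a knot group is the normal closure of a meridian, together with the normality of $R$ in the commensurator, does the rest. Once this is in place the cusp count is routine, the only delicate step being the verification that the number of cusps of $\mathbb H^3/R$ lying over the cusp of $\mathcal{O}_{full}(K)$ equals $[G:R]$, which is where the inclusion $G_\xi \leq R$ is used a second time.
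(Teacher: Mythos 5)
Your proof is correct, and its core coincides with the paper's: both arguments rest on the two observations that the cusp stabilizer $G_\xi$ is generated by genuine hyperbolic reflections (because the cusp cross section is a Euclidean reflection $2$-orbifold and reflections of a horosphere extend to reflections of $\mathbb{H}^3$ in vertical planes), and that $\Gamma_K$, being normally generated by a meridian lying in $G_\xi$, is therefore swallowed by the reflections of $G$. Where you genuinely diverge is the finishing step. The paper uses the fact that both $S^3 \setminus K$ and $\mathcal{O}_{full}(K)$ have a single cusp to conclude that the degree of the covering restricted to the cusp equals the degree of the covering itself; hence coset representatives for $\Gamma_K$ in $G$ may be chosen inside the cusp group, and every element of $G$ is then visibly a product of reflections via the decomposition $G = G_\xi \Gamma_K$. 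You instead pass to the subgroup $R \trianglelefteq G$ generated by all reflections of $G$ and run a cusp count on the intermediate orbifold $\mathbb{H}^3/R$: normality together with $G_\xi \leq R$ gives $R g G_\xi = Rg$, so that orbifold has exactly $[G:R]$ cusps, while being finitely covered by the one-cusped $S^3 \setminus K$ forces it to have exactly one, whence $R = G$. The two finishes consume the same input (one-cusped-ness) but package it differently: the paper's is more direct, producing explicit peripheral coset representatives, while yours trades that for the double-coset computation and normality of $R$, which makes the conjugation-invariance the paper invokes informally ("any conjugate of a product of reflections is a product of reflections") into the structural backbone of the argument. One point worth making explicit in either version: the paper's definition of a hyperbolic reflection group requires finitely many generating reflections, which holds here because $G$ is a lattice and hence finitely generated, so finitely many of the reflections suffice.
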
 

\begin{proof}
By Lemma \ref{l:prelim}(1), $S^3 \setminus K$ admits hidden symmetries and so by \cite[Corollary 4.11]{BBCW1}, the minimal element $\O_{min}(K)$ in the orientable commensurability class of $S^3\setminus K$ has underlying space a ball. Further, $\O_{min}(K)$ has a rigid cusp so its cusp cross section is a Euclidean 2-orbifold with 3 cone points. Thus it is of the form $S^2(2,3,6), S^2(2,4,4)$ or $S^2(3,3,3)$. 

The full commensurator quotient $\mathcal{O}_{full}(K)$ of $\mathbb H^3$ is the quotient of $\O_{min}(K)$ by an orientation-reversing involution, so the cusp cross section of $\mathcal{O}_{full}(K)$ is the quotient of the cusp cross section of $\mathcal{O}_{min}(K)$ by an orientation-reversing involution. Since we have assumed that this quotient is a Euclidean reflection orbifold, it must have underlying space a triangle with boundary made up of three reflector lines and three corner-reflectors. Thus the fundamental group of the cusp cross section of $\mathcal{O}_{full}(K)$ is a triangle group. It follows that the peripheral subgroup is a group generated by reflections so in particular, any meridional class of the knot is a product of reflections in the full commensurator. Since any conjugate of a product of reflections is a product of reflections, the knot group, which is normally generated by the meridian, is generated by products of reflections.  

Let $\Gamma_K, t_1\Gamma_K, t_2\Gamma_K, ...t_n\Gamma_K$ be the left cosets of $\Gamma_K$ in $\mathcal{O}_{full}(K)$. Since the one cusp of the knot complement covers the cusp of the commensurator quotient, the index of the covering restricted to the cusp is the same as the index of the cover. Thus  we may take our $t_i$ to be in the cusp group and therefore we may suppose that the $t_i$ are products of reflections. It follows that every element in the full commensurator quotient is a product of reflections in the full commensurator quotient, so the group is generated by reflections.
\end{proof} 

\begin{remark}  \label{236 implies reflection}
{\rm Suppose that $K$ is a non-arithmetic hyperbolic knot whose full commensurator contains an orientation reversing involution and whose orientable commensurator quotient has cusp cross section of the form 
$S^2(2,3,6)$. Since any involution of $S^2(2,3,6)$ fixes each of its cone points, the cusp cross section of $\mathcal{O}_{full}(K)$ is a Euclidean reflection 2-orbifold. Thus $\mathcal{O}_{full}(K)$ is a hyperbolic reflection orbifold by Lemma \ref{l:reflection}.} 
\end{remark}

\begin{lemma} \label{l: quad type}
A hyperbolic reflection orbifold, and its orientation double cover, cannot be regularly covered by a knot complement. 
\end{lemma}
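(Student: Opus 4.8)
The plan is to treat the two assertions separately and in both cases exploit that a \emph{regular} cover means $\Gamma_K$ is normal in the covering group $\Gamma$, so $\Gamma \leq N(\Gamma_K)$ and the deck group $\Gamma/\Gamma_K$ embeds in $N(\Gamma_K)/\Gamma_K = \Isom(S^3,K)$ by Mostow rigidity. For the reflection orbifold $\O$ itself, $\Gamma = \pi_1^{orb}(\O)$ is generated by reflections; choose a reflection $r \in \Gamma$. Since $\Gamma_K$ is torsion-free we have $r \notin \Gamma_K$, so $r$ projects to a nontrivial involution of $\Isom(S^3,K)$ whose fixed set is the $2$-dimensional image of the reflector plane $\mathrm{Fix}(r)$; that is, a reflection symmetry of the pair $(S^3,K)$. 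By the argument already given in the proof of Lemma \ref{l:prelim}(1), such a symmetry forces $K$ to be a connected sum, contradicting that $K$ is hyperbolic, hence prime. This disposes of the reflection orbifold.

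For the orientation double cover $\TO$, write $\Gamma^+ = \pi_1^{orb}(\TO)$, the orientation-preserving index-two subgroup of $\Gamma$. A regular cover $S^3\setminus K \to \TO$ gives $G^+ := \Gamma^+/\Gamma_K \leq \Isom(S^3,K)$, now orientation-preserving. Since $\O$ is one-cusped and its Euclidean cusp cross section $B$ is a reflection $2$-orbifold with all-reflector boundary, $\TO$ has a single cusp whose cross section $\tilde B$ is the connected orientation double cover of $B$, hence one of the flat orbifolds $S^2(2,3,6)$, $S^2(2,4,4)$, $S^2(3,3,3)$, $S^2(2,2,2,2)$. Because one cusp lies over one cusp and the cover is regular, $G^+$ is exactly the deck group of $T^2 \to \tilde B$ acting on the cusp torus $\partial\nu(K)$, namely $\mathbb{Z}/6$, $\mathbb{Z}/4$, $\mathbb{Z}/3$ in the three rigid cases and $\mathbb{Z}/2$ in the quadrilateral case.

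In each rigid case the orbifold $\tilde B$ has a cone point of some order $m\geq 3$ whose stabilizer is all of $G^+$, so $G^+$ contains an element $g$ of order $m\geq 3$ fixing a point $\tilde p$ of $\partial\nu(K)$. By the positive solution of the Smith conjecture, $\mathrm{Fix}_{S^3}(g)$ is an unknotted circle $A$. Since $\tilde p \in A \cap \partial\nu(K)$ and $g$ preserves $\nu(K)$ while rotating it by $2\pi/m$ about the core, $A$ must meet $K$; at an intersection point $q$ the rotation $g$ fixes $q$ and preserves $K$, and because $m\geq 3$ the differential fixes no nonzero vector of the normal plane to $A$, forcing $T_qK$ to lie along $A$ and hence $K=A$ to be unknotted. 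This contradiction eliminates the three rigid cusp types.

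The quadrilateral case is the main obstacle. Here $G^+=\langle g\rangle \cong \mathbb{Z}/2$ with $g$ a strong inversion (its axis meets $K$ in two points), no cone point has order $\geq 3$, and the preceding argument fails. Since $\TO$ is the orientation double cover of $\O$, it carries a reflection $\sigma$ with $\TO/\sigma = \O$; concretely $\sigma$ is conjugation by a reflection $r\in\Gamma$, and the singular locus of $\TO$ lies in $\mathrm{Fix}(\sigma)$. If $\sigma$ lifts to $S^3\setminus K$, equivalently $r\Gamma_K r^{-1}=\Gamma_K$, then $r$ descends to a reflection symmetry of $(S^3,K)$ and we again obtain a connected sum, a contradiction. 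Otherwise $\Gamma_K' := r\Gamma_K r^{-1}\neq\Gamma_K$, and $\Delta := \Gamma_K\cap\Gamma_K'$ is a torsion-free normal subgroup of $\Gamma$ normalized by $r$, with $\Gamma^+/\Delta\cong\mathbb{Z}/2\times\mathbb{Z}/2$; as $g$ is elliptic of order two it avoids the two torsion-free subgroups $\Gamma_K/\Delta$ and $\Gamma_K'/\Delta$, so it lies in the third, $r$-invariant, ``diagonal'' index-two subgroup. I expect to complete this case by studying the order-$8$ deck group $Q=\Gamma/\Delta$ acting on the manifold $N=\mathbb{H}^3/\Delta$ together with the position of the totally geodesic surface $\mathrm{Fix}(\bar r)$ relative to the cusp and to the fixed axis of $g$, forcing either a reflection symmetry of $(S^3,K)$ after all or a configuration incompatible with $K\subset S^3$. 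Closing this strong-inversion subcase cleanly is where the real work lies.
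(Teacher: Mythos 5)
Your first half---ruling out a regular cover of the reflection orbifold $\mathcal{O}$ itself by pushing a reflection $r\in\pi_1^{orb}(\mathcal{O})$ down to a reflection symmetry of $(S^3,K)$ and citing primality---is correct, and it is the same mechanism the paper relies on (it is precisely the fact, from the proof of Lemma \ref{l:prelim}(1), that no hyperbolic knot group is normalized by a reflection). The rigid-cusp cases can also be eliminated, though your route is shakier than necessary: the group of orientation-preserving isometries of a hyperbolic knot complement is cyclic or dihedral (the paper cites the Smith conjecture; alternatively, any such isometry preserves the meridian slope up to sign), so it acts on the cusp torus with quotient either $T^2$ or $S^2(2,2,2,2)$, and the turnover cusps $S^2(2,3,6)$, $S^2(2,4,4)$, $S^2(3,3,3)$ are impossible outright. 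This is the paper's one-line parenthetical remark, and it needs no fixed-point analysis.

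The genuine gap is the quadrilateral case, which you concede is unfinished (``where the real work lies''), and whose setup is moreover incorrect in a way that hides the actual difficulty. You assert that since one cusp lies over one cusp, $G^+$ ``is exactly the deck group of $T^2\to\tilde B$,'' hence $\mathbb{Z}/2$ when $\tilde B=S^2(2,2,2,2)$. This confuses the minimal torus cover of $\tilde B$ with the given one: $T^2\to S^2(2,2,2,2)$ admits regular covers of every even degree, with deck group dihedral of order $2n$, generated by a strong inversion together with a cyclic symmetry of order $n$ whose axes are disjoint from $K$. In fact the paper shows that your case $G^+\cong\mathbb{Z}/2$ never occurs: if the deck group were generated by a strong inversion alone, $\Sigma(\TO)$ would consist of two disjoint properly embedded lines $L_1,L_2$ in the reflection plane $P$ of the cover $\TO\to\mathcal{O}$, and this cannot be the edge graph of a polyhedron with one ideal vertex---which $\Sigma(\TO)\subset P$ must be, since $\TO$ is the double of a reflection orbifold. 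So the cases your argument would actually have to handle, dihedral deck groups with $n\geq 2$, are exactly the ones you never consider; and your proposed continuation (the subgroup $\Delta=\Gamma_K\cap r\Gamma_K r^{-1}$ and the order-$8$ group $Q$) does not engage the structural fact that does all the work in the paper, namely that $\Sigma(\TO)$ lies in $P$ and is a polyhedral edge graph. Using that fact, the paper argues the strong-inversion axis must meet each axis of the cyclic symmetry in two points, forcing $\Sigma(\TO)$ to be $L_1\cup L_2$ together with two arcs joining them whose sides on $L_1,L_2$ are labelled $2$; this yields an essential $D^2(2,2)$ properly embedded in $\TO^{tr}$, contradicting the hyperbolicity of $\TO$. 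Without a substitute for this combinatorial step, the lemma is not proved.
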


\begin{proof}
Suppose that a knot complement $S^3 \setminus K$ regularly covers a hyperbolic reflection orbifold $\mathcal{O}$ and consider the orientation double cover $\TO$ of $\mathcal{O}$, which is also regularly covered by $S^3 \setminus K$. Since the interior of $\TO$ has underling space $\mathbb R^3$, its cusp cross section is $S^2(2,2,2,2)$. (The group of orientation-preserving isometries of a hyperbolic knot complement is cyclic or dihedral, by the positive solution of the Smith conjecture, so any orientable orbifold regularly covered by the knot complement has cusp cross section a torus or $S^2(2,2,2,2)$.) Thus $\TO$ is the quotient of $S^3 \setminus K$ by a dihedral group generated by a strong inversion of $K$ and a cyclic symmetry $\sigma$ of order $n \geq 1$ whose axes are disjoint from $K$. 

We noted above that the singular set $\Sigma(\TO)$ of $\TO$ is contained in the reflection $2$-plane $P$ of the cover $\TO \to \mathcal{O}$. Since the cusp cross section of $\TO$ is $S^2(2,2,2,2)$, the image in $\TO$ of the axis of the strong inversion is a pair of disjoint properly embedded real lines $L_1$ and $L_2$.    This cannot be all of $\Sigma(\TO) \subset P$, as it does not form the edges of a polyhedron with one ideal vertex.  Thus $\sigma$ has order $n > 1$. Since the quotient $(S^3 \setminus K)/\langle \sigma\rangle$ has singular set consisting of one or two circles, while, as we noted above, each loop in $\Sigma(\TO)$ has a vertex, the axis of the strong inversion meets each of the axes of the cyclic symmetry in two points. It follows that $\Sigma(\TO)$ is connected and is the union of $L_1, L_2$ and one arc for each of the circles in the singular set of $(S^3 \setminus K)/\langle \sigma\rangle$. The reader will verify that the hyperbolicity of $S^3 \setminus K$ forces $\Sigma(\TO)$ to be the union of $L_1, L_2$ and two arcs, each running from $L_1$ to $L_2$. By construction, the sides of the quadrilateral labeled contained in $L_1$ and $L_2$ are disjoint and labeled $2$. But then there is an essential $D^2(2,2)$ properly embedded in $\TO^{tr}$, the truncation of $\TO$ along an $S^2(2,2,2,2)$ cusp cross section, contrary to the hyperbolicity of $\TO$. This completes the proof. 
\end{proof}

\section{AP knots} \label{s: ap knots}

\noindent 
The main result of this section is given by Proposition \ref{P:small}, which allows us to reduce the proof of Theorem \ref{T:AP} to that of Theorem \ref{T:reflections}. We begin with some definitions.

\begin{definition} Let $M$ be a compact $3$-manifold with incompressible boundary. \\ 
$(1)$ An {\bf accidental parabolic} of an essential surface $S$ in $M$ is an essential loop on $S$ which is homotopic in $M$ to a peripheral curve of $M$. \\ 
$(2)$ An {\bf AP knot} is a knot $K$ in $S^3$ such that any closed essential surface in the exterior of $K$ contains an accidental parabolic.  

\end{definition}


\noindent The class of AP knots includes {\it small knots} (i.e.~knots whose exteriors contain no closed essential surfaces) and toroidally alternating knots, a class which contains all alternating knots, all almost alternating knots, and all Montesinos knots. See  \cite{Ad}.

Let $K$ be an AP knot. If $S$ is a closed essential surface in the exterior $M_K$ of $K$ and $N$ is the component of $M_K$ cut open along $S$ which contains $\partial M_K$, then the annulus theorem implies that there is an essential annulus  properly embedded in $N$ with boundary the union of an essential simple closed curve on $S$ and an essential simple closed curve on $\partial M_K$.  In the case that $K$ is a toroidally alternating knot, we can take the essential simple closed curve on $\partial M_K$ to be a meridional curve of $K$ (\cite{Ad}).

A 2-suborbifold $\mathcal{F}$ of a 3-orbifold $\mathcal{O}$ is {\it orbifold-incompressible} if there is no orbifold-essential curve on $\mathcal{F}$ that bounds an orbi-disc in $\mathcal{O}$, 
$\mathcal{F}$ is not a spherical orbifold which bounds an orbi-ball, and $\mathcal{F}$ is finitely covered by a surface. It is {\it essential} if it is orbifold-incompressible and not boundary-parallel. 

An obifold is {\em small} if it is irreducible and contains no closed, essential  2-suborbifold.  Note that a small manifold cannot cover an orbifold which contains an essential closed $2$-suborbifold.  

\begin{proposition}\label{P:small} If the complement of an AP knot $K$ finitely covers an orientable orbifold $\mathcal{O}$ with rigid cusp, then $\mathcal{O}$ does not contain any closed essential orientable $2$-suborbifold, and thus is small.
\end{proposition}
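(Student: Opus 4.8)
The plan is to argue by contradiction, showing that a closed essential orientable $2$-suborbifold in $\mathcal{O}$ is incompatible with the AP hypothesis together with the rigidity of the cusp. First note that if $K$ is small the statement is immediate, since (as remarked above) a small manifold cannot cover an orbifold containing a closed essential $2$-suborbifold; so I may assume $M_K=S^3\setminus K$ contains a closed essential surface. Suppose then that $\mathcal{F}\subset\mathcal{O}$ is a closed, essential, orientable $2$-suborbifold, and let $\tilde{\mathcal F}=p^{-1}(\mathcal F)$ be its preimage under the finite covering $p\colon M_K\to\mathcal{O}$. As $M_K$ is a manifold and essential suborbifolds are $\pi_1$-injective, $\tilde{\mathcal F}$ is a closed incompressible surface. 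Since $\mathcal{O}$ is hyperbolic and $\mathcal F$ is essential, $\chi^{\orb}(\mathcal F)<0$ (a spherical $\mathcal F$ would bound an orbi-ball and a Euclidean one would be boundary-parallel to the rigid cusp), so $\tilde{\mathcal F}$ has negative Euler characteristic; in particular it is not a boundary-parallel torus, and hence $\tilde{\mathcal F}$ is a closed essential surface in $M_K$.

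Next I would invoke the AP property. Applying it to $\tilde{\mathcal F}$ and letting $N$ be the component of $M_K$ cut along $\tilde{\mathcal F}$ containing $\partial M_K$, the annulus theorem recalled above produces an essential spanning annulus $A\subset N$ with $\partial A=\gamma\cup c$, where $\gamma$ is an essential simple closed curve on $\tilde{\mathcal F}$ and $c$ an essential simple closed curve on $\partial M_K$. Thus $\gamma$ is freely homotopic in $M_K$ to the peripheral curve $c$, so $[\gamma]$ is a parabolic element of $\Gamma_K$ conjugate to $[c]$. Pushing forward, $\bar\gamma:=p_*[\gamma]$ lies in the closed $2$-orbifold group $\pi_1^{\orb}(\mathcal F)\le\pi_1^{\orb}(\mathcal O)$; because orbifold covering maps are $\pi_1$-injective and $\gamma$ has infinite order, $\bar\gamma$ is an infinite-order, hence essential, element of $\pi_1^{\orb}(\mathcal F)$ which is parabolic in $\pi_1^{\orb}(\mathcal O)$ and conjugate into the cusp subgroup. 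In other words, $\mathcal F$ carries an accidental parabolic of $\mathcal{O}$, realized by the singular spanning annulus $p(A)$ joining $\mathcal F$ to the cusp cross section $T$.

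Now I would bring in the rigidity of the cusp. Writing $\mathcal N=p(N)$ for the piece of $\mathcal O$ cut along $\mathcal F$ containing the cusp, the map $A\to\mathcal N$ induced by $p$ is an essential singular annulus: it is $\pi_1$-injective since $\bar\gamma$ has infinite order, and it is not homotopic into $\partial\mathcal N$ since it spans the two boundary pieces $\mathcal F$ (with $\chi^{\orb}<0$) and $T$ (with $\chi^{\orb}=0$), which are not homeomorphic. By the orbifold (equivariant) annulus theorem applied to $\mathcal N$, which has incompressible boundary, there is an embedded essential annular $2$-suborbifold $\mathcal A\subset\mathcal N$ realizing the same boundary pattern, with one boundary component an essential simple closed curve on $\mathcal F$ and the other an essential simple closed curve on $T$. (Spanning two distinct boundary components, $\mathcal A$ must be a genuine annulus rather than a $D^2(2,2)$.) But $T$ is a rigid cusp cross section, namely a Euclidean turnover $S^2(2,3,6)$, $S^2(2,4,4)$ or $S^2(3,3,3)$, and a sphere with three cone points carries no essential simple closed curve: every simple closed curve on it bounds a disc with at most one cone point. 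This contradicts the boundary curve of $\mathcal A$ lying on $T$. Hence no such $\mathcal F$ exists, so $\mathcal O$ contains no closed essential orientable $2$-suborbifold; as any closed essential non-orientable suborbifold would yield an orientable one (the boundary of a regular neighbourhood) and $\mathcal O$ is irreducible, $\mathcal O$ is small.

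The main obstacle I anticipate is the passage from the immersed spanning annulus $p(A)$ to an embedded essential annulus $\mathcal A$ with its boundary pattern preserved, i.e.\ the correct orbifold form of the annulus theorem guaranteeing a boundary component on the cusp piece $T$ rather than both boundary components on $\mathcal F$. One must check that $\mathcal N$ is neither an $I$-bundle nor Seifert fibred in a way that would weaken the conclusion; this is excluded because $\mathcal O$ is hyperbolic and because $\mathcal F$ and $T$ have different orbifold Euler characteristics, so $\mathcal N$ cannot be a product region between them. Once the embedded spanning annulus is in hand, the contradiction with the turnover carrying no essential simple closed curve is immediate, and is precisely the step where the rigidity of the cusp is used.
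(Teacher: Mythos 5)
Your reduction to a singular essential annulus in $\mathcal{N}$ joining $\mathcal{F}$ to the turnover cusp cross section $T$ is fine, and you have correctly located where the rigidity of the cusp must enter. But the step you lean on to finish --- ``by the orbifold (equivariant) annulus theorem there is an embedded essential annular $2$-suborbifold $\mathcal{A}\subset\mathcal{N}$ realizing the same boundary pattern, with one boundary component an essential simple closed curve on $T$'' --- is not a theorem you can cite, and this is a genuine gap rather than a technicality. As you yourself observe in the very next sentence, a turnover carries no essential simple closed curve at all, so the conclusion of the ``theorem'' you invoke can \emph{never} hold when one of the boundary components is a turnover. A statement whose conclusion is impossible is equivalent to the assertion that its hypothesis never occurs, i.e.\ that no essential singular annulus can run from a closed essential $2$-suborbifold to a turnover boundary component; that assertion is precisely the substantive content of the proposition, so as written your argument assumes what it must prove. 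The boundary-component-preserving annulus theorem you have in mind (Cannon--Feustel) is a theorem about \emph{manifolds}, whose proof uses essential simple closed curves on the boundary surfaces; it does not extend to orbifold boundary components with cone points, and passing to a finite manifold cover and using an equivariant annulus theorem loses exactly the boundary-pattern control you need (an equivariant embedded annulus upstairs may have both ends over $\mathcal{F}$, or descend to a $D^2(2,2)$). Your fallback check that $\mathcal{N}$ is not an $I$-bundle or a product does not address this.

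What is true, and what the paper actually uses, is the weaker geometric fact that a Euclidean turnover can never be incident to a Seifert fibred piece: boundary components of Seifert pieces are tori or copies of $S^2(2,2,2,2)$. To exploit this without an enclosing-type theorem for \emph{singular} annuli in orbifolds, the paper doubles the piece $\mathcal{O}_1=\mathcal{N}$ along its whole boundary and works upstairs in the manifold double $D(M_1)$ of $M_1=N$: the geometric pieces of $D(M_1)$ adjacent to $\partial M_K$ are hyperbolic (because downstairs they are adjacent to the turnover), so $\partial M_K$ is a JSJ torus of $D(M_1)$; on the other hand the double $D(A)$ of your embedded AP annulus $A$ is an incompressible torus meeting $\partial M_K$ in a curve with a homological dual, so it cannot be isotoped off $\partial M_K$ --- a contradiction obtained entirely with embedded tori and manifold JSJ theory. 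If you want to keep the argument downstairs in $\mathcal{N}$, you would have to first prove that your singular annulus can be homotoped into the characteristic suborbifold of $\mathcal{N}$ and then rule out pieces meeting $T$; that is essentially the same work the paper's doubling trick accomplishes, not a routine citation.
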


\begin{proof} 
Let $\mathcal{O}^{tr}$ be the result of truncating $\mathcal{O}$ along a Euclidean turnover cross section of its cusp, and let $M_K$ be the inverse image of $\mathcal{O}^{tr}$ in $S^3 \setminus K$. Then $M_K$ is the exterior of $K$. Let $\pi: M_K \to \mathcal{O}^{tr}$ be the covering map. In order to obtain a contradiction, we assume that $\hbox{int}(\mathcal{O}^{tr})$ contains a closed, essential, orientable, connected 2-suborbifold $\mathcal{F}$. Since $\mathcal{O}^{tr}$ is finitely covered by a hyperbolic knot exterior, $\mathcal{F}$ has negative Euler characteristic and separates $\mathcal{O}^{tr}$. The 2-suborbifold $\mathcal{F}$ splits $\mathcal{O}^{tr}$ into two compact 3-suborbifolds $\mathcal{O}_1$ and $\mathcal{O}_2$, where $\partial \mathcal{O}_1 = \mathcal{F} \cup \partial \mathcal{O}^{tr}$. The preimage $F=\pi^{-1}(\mathcal{F})$ is a closed orientable (possibly disconnected) essential surface in $\partial M_1$ where $M_1 = \pi^{-1}(\mathcal{O}_1)$. Note that as $\pi^{-1}(\partial \mathcal{O}^{tr}) = \partial M_K$ is connected, $M_1$ is a compact, connected submanifold of $M_K$. In particular, it is the component of $M_K$ split open along $F$ with boundary $\partial M_1 = F \cup \partial M_K$.

A closed Euclidean $2$-suborbifold is called {\it canonical} if it can be isotoped off any essential closed Euclidean 2-suborbifold. For instance, Euclidean turnovers are always canonical (\cite[Remark, page 47]{BMP}). By the JSJ theory of $3$-orbifolds, any maximal collection of disjoint, canonical, essential, closed Euclidean $2$-suborbifolds in a compact, irreducible $3$-orbifold $\mathcal{O}$ is finite (\cite[Theorem 3.11]{BMP}) and unique up to isotopy (\cite[Theorem 3.15]{BMP}). It follows that each isotopy class of Euclidean turnovers in $\mathcal{O}$ is contained in this collection. If $\mathcal{O}$ contains no bad $2$-suborbifolds, the solution of the geometrisation conjecture implies that either $\mathcal{O}$ is a closed $Sol$ orbifold or the collection splits $\mathcal{O}$ into geometric pieces. See \cite[\S 3.7]{BMP}.

Doubling $\mathcal{O}_1$ along its boundary produces a closed, connected, irreducible orbifold $D(\mathcal{O}_1)$ which is finitely covered by the double $D(M_1)$ of $M_1$. Thus $D(\mathcal{O}_1)$ is irreducible, contains no bad $2$-suborbifolds, and contains an essential suborbifold of negative Euler characteristic (i.e. any component of $\mathcal{F}$). The latter fact implies that $D(\mathcal{O}_1)$ is not a $Sol$ orbifold. It follows that a maximal collection of disjoint, canonical, essential, closed Euclidean $2$-suborbifolds of $D(\mathcal{O}_1)$ splits it into geometric pieces. Since boundary components of a Seifert piece are tori or copies of $S^2(2,2,2,2)$, the pieces which are incident to $\partial \mathcal{O}_1$ must be hyperbolic. 

The geometric splitting of $D(\mathcal{O}_1)$ lifts to a geometric splitting of $D(M_1)$ and, from the previous paragraph, the geometric pieces containing $\partial M_K$ are hyperbolic. Thus $\partial M_K$ is a JSJ torus in $D(M_1)$ and so any incompressible torus in $D(M_1)$ can be isotoped into its complement.  On the other hand, since $K$ is an AP knot, there is a properly embedded essential annulus $A$ in $M_1$ running from $F = \partial M_1 \setminus \partial M_K$, to $\partial M_K$. The double $T = D(A)$ of this annulus is a torus in $D(M_1)$ which meets the torus $\partial M_K$ along an essential simple closed curve, so that $T$ is non-separating in $D(M_1)$. It follows that $T$ is incompressible, as otherwise the irreducible manifold $D(M_1)$ would contain a non-separating $2$-sphere. The double of the co-core of $A$ is a simple closed curve on $T$ which meets $\partial M_K$ transversely in a single point, and hence is homologically dual to $\partial M_K$. But then $T$ cannot be isotoped off of $\partial M_K$, which contradicts our observations above. Thus $\mathcal{O}$ does not contain any closed essential orientable $2$-suborbifold, which completes the proof. 
\end{proof}
 
Here is an immediate corollary of Proposition \ref{P:small} and the fact that the minimal orientable orbifold in the commensurability class of a  non-arithmetic hyperbolic knot complement with hidden symmetries has a rigid cusp (\cite[Proposition 9.1]{NR1}). 
 
 \begin{corollary}\label{c:minimalsmall} If the complement of a non-arithmetic hyperbolic  AP knot $K$ admits hidden symmetries, then the minimal orientable orbifold $\O_{min}(K)$ in its commensurability class is small. 
 \qed 
 \end{corollary}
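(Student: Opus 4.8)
The plan is to derive the corollary directly by combining Proposition \ref{P:small} with the characterisation of hidden symmetries in terms of rigid cusps recorded in the introduction. First I would set up the covering data. Since $K$ is non-arithmetic, Margulis's theorem guarantees that the commensurator $C(\Gamma_K)$ is discrete, so that the commensurability class of $S^3\setminus K$ has a minimal orientable element $\O_{min}(K) = \mathbb{H}^3/C^+(\Gamma_K)$, where $C^+(\Gamma_K) = C(\Gamma_K)\cap \Isom^+(\mathbb{H}^3)$ is the orientation-preserving commensurator of $\Gamma_K$. Because $\Gamma_K \leq PSL_2(\mathbb{C})$ consists of orientation-preserving isometries, we have $\Gamma_K \leq C^+(\Gamma_K)$, and discreteness forces $[C^+(\Gamma_K):\Gamma_K]<\infty$; hence $S^3\setminus K$ finitely covers $\O_{min}(K)$, which is orientable by construction.

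Next I would verify the cusp hypothesis of Proposition \ref{P:small}. Because $K$ admits hidden symmetries and is non-arithmetic, the equivalence recorded in the introduction (see \cite[Proposition 9.1]{NR1}) shows that $\O_{min}(K)$ has a rigid cusp, i.e.\ its cusp cross section is a Euclidean turnover. Thus all the hypotheses of Proposition \ref{P:small} are in place: the complement of the AP knot $K$ finitely covers the orientable orbifold $\O_{min}(K)$, and that orbifold has a rigid cusp.

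Applying Proposition \ref{P:small} with $\mathcal{O}=\O_{min}(K)$ then yields that $\O_{min}(K)$ contains no closed essential orientable $2$-suborbifold and is therefore small, which is exactly the assertion. Since the genuine content has already been established in Proposition \ref{P:small}, there is no real obstacle here; the only points requiring care are bookkeeping ones, namely confirming that it is the minimal \emph{orientable} orbifold $\O_{min}(K)$ (rather than $\O_{full}(K)$, which need not be orientable and whose applicability to Proposition \ref{P:small} is not claimed) to which the proposition should be applied, and that the covering $S^3\setminus K \to \O_{min}(K)$ has finite degree so that the hypothesis ``finitely covers'' in Proposition \ref{P:small} is met.
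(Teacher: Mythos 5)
Your proposal is correct and follows exactly the paper's route: the paper states this corollary as an immediate consequence of Proposition \ref{P:small} together with the fact (\cite[Proposition 9.1]{NR1}) that the minimal orientable orbifold in the commensurability class of a non-arithmetic hyperbolic knot complement with hidden symmetries has a rigid cusp. Your additional bookkeeping about discreteness of the commensurator and finiteness of the cover $S^3\setminus K \to \O_{min}(K)$ is a sound elaboration of what the paper leaves implicit.
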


\begin{proof}[Proof of Theorem \ref{T:AP} modulo Theorem \ref{T:reflections}] 
Let $K$ be a hyperbolic AP knot and suppose that $S^3 \setminus K$ finitely covers a reflection orbifold $\mathcal{O}$. We claim that the orientation double cover $\TO$ of $\mathcal{O}$, which is also covered by $S^3 \setminus K$, has a rigid cusp. Suppose otherwise and note that as the cusp cross section of $\mathcal{O}$ has underlying space a disk, the cusp cross section of $\TO$ is necessarily $S^2(2,2,2,2)$, and so the cover $S^3 \setminus K \to \widetilde{\mathcal{O}}$ is regular (\cite[\S 6.2]{Re1}), contrary to Lemma \ref{l: quad type}. Thus $\widetilde{O}$ has a rigid cusp, and by Proposition \ref{P:small} it cannot contain a 
closed essential orientable 2-suborbifold. Then $\mathcal{O}$ cannot contain a closed essential 2-suborbifold.
It follows that $S^3 \setminus K$ must cover a small reflection orbifold. Theorem \ref{T:AP} now follows from Theorem \ref{T:reflections}, whose proof is contained in  \S \ref{s: kc&reflection orbifolds}.
\end{proof}

The proof of  Corollary \ref{c:achiral} follows from Theorem \ref{T:AP} and Remark \ref{236 implies reflection}. 

\begin{proof}[Proof of Corollary \ref{c:achiral}]
Since the complement of $K$ covers an orientable orbifold with cusp cross section $S^2(2,3,6)$, its minimal orientable orbifold $\O_{min}(K)$ has cusp cross section $S^2(2,3,6)$. 
Moreover $K$ admits an orientation reversing symmetry, hence its full commensurator is strictly bigger than its orientable commensurator and thus 
the full commensurator quotient $\mathcal{O}_{full}(K)$ is the quotient of the orientable 
minimal orbifold $\O_{min}(K)$ by an orientation reversing involution.
Since any involution of the turnover $S^2(2,3,6)$ fixes each of its cone points, the cusp cross section of  $\mathcal{O}_{full}(K)$ is a 
Euclidean reflection 2-orbifold. Thus $\mathcal{O}_{full}(K)$ is a hyperbolic reflection orbifold by Lemma \ref{l:reflection} and the result follows from Theorem \ref{T:AP}.
\end{proof} 

\begin{proof}[Proof of Theorem \ref{t: AP Hoffman}] 
Hoffman proved part (1) of Theorem \ref{t: AP Hoffman} for small hyperbolic knots (\cite[Theorem 1.1]{Neilsmall}), though the smallness of $S^3 \setminus K$ is only used to deduce that the lattice $\Gamma_{\O_{min}(K)} \leq PSL_2(\mathbb C)$, corresponding to the fundamental group of $\O_{min}(K)$, has integral traces. Since the smallness of $\O_{min}(K)$ suffices to assure this last property, Corollary \ref{c:minimalsmall} allows us to extend Hoffman's result to the case of AP knots. This proves (1). Similarly the claims in the first sentence of Theorem \ref{t: AP Hoffman}(2) follows from the proof of \cite[Theorem 1.2]{Neilsmall}. By remark \ref{236 implies reflection} and Theorem \ref{T:AP}, if $\O_{min}(K)$ admits a reflection, it covers a reflection orbifold and so is one of the dodecahedral knots.  But these both admit an orientation-preserving non-trivial symmetry. 
\end{proof} 

An {\it APM knot} is an AP knot $K$ such that any closed essential surface in $S^3 \setminus K$ carries an essential curve which is homotopic to a meridian  of $K$. Examples of APM knots include all toroidally alternating knots. The next proposition puts constraints on the minimal orientable orbifold in the commensurability classes of certain APM knot complements.

\begin{proposition}  \label{p:apm small}
If two distinct APM knot complements cover an orbifold $\mathcal{O}$ with a flexible cusp, then $\mathcal{O}$ is small. 
\end{proposition}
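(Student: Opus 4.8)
The plan is to mimic the proof of Proposition \ref{P:small} as closely as possible, adapting it to the situation where the common cover has a \emph{flexible} (rather than rigid) cusp and where the two covering knots are APM rather than merely AP. The key structural point to exploit is that a flexible cusp has cross section a torus or $S^2(2,2,2,2)$, so the analysis of $\mathcal{O}$ and its suborbifolds will go through the JSJ/geometrisation machinery for $3$-orbifolds exactly as before. As in Proposition \ref{P:small}, I would argue by contradiction: assume $\mathcal{O}$ contains a closed, essential, orientable, connected $2$-suborbifold $\mathcal{F}$, necessarily of negative Euler characteristic and separating, and split $\mathcal{O}^{tr}$ along $\mathcal{F}$ into pieces $\mathcal{O}_1, \mathcal{O}_2$ with $\partial \mathcal{O}_1 = \mathcal{F} \cup \partial \mathcal{O}^{tr}$.

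First I would set up the two covers. Denote the two distinct APM knots by $K$ and $K'$, with covering maps $\pi \colon M_K \to \mathcal{O}^{tr}$ and $\pi' \colon M_{K'} \to \mathcal{O}^{tr}$. Pulling $\mathcal{F}$ back under each map produces closed orientable essential surfaces $F = \pi^{-1}(\mathcal{F})$ and $F' = (\pi')^{-1}(\mathcal{F})$ in the respective knot exteriors, and (as in Proposition \ref{P:small}) connectedness of $\partial M_K$ and $\partial M_{K'}$ forces the pieces $M_1 = \pi^{-1}(\mathcal{O}_1)$ and $M_1' = (\pi')^{-1}(\mathcal{O}_1)$ to be the connected components containing the respective knot boundaries. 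Doubling $\mathcal{O}_1$ along its boundary gives a closed irreducible orbifold $D(\mathcal{O}_1)$ with no bad suborbifolds and an essential piece of negative Euler characteristic, hence not a $Sol$ orbifold; its canonical Euclidean splitting decomposes it into geometric pieces, and the pieces meeting $\partial \mathcal{O}_1$ must be hyperbolic. This splitting lifts to $D(M_1)$ and to $D(M_1')$, so in both doubles the corresponding knot boundary torus is a JSJ torus, and any incompressible torus can be isotoped off of it.

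Next, the APM hypothesis is what delivers the contradiction on \emph{both} sides simultaneously. Because each knot is APM, the closed essential surface $F$ (respectively $F'$) carries an essential curve homotopic to a \emph{meridian}, so the annulus theorem yields an essential annulus $A$ in $M_1$ (respectively $A'$ in $M_1'$) running from $F$ to $\partial M_K$ \emph{with its boundary curve on $\partial M_K$ a meridian}. Doubling, $T = D(A)$ is a torus in $D(M_1)$ meeting $\partial M_K$ along an essential curve; as in Proposition \ref{P:small} it is non-separating, hence incompressible, and the double of the co-core meets $\partial M_K$ transversely once, so $T$ cannot be isotoped off $\partial M_K$, contradicting the JSJ conclusion. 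This already shows $\mathcal{O}$ contains no closed essential orientable $2$-suborbifold; to upgrade to full smallness I then rule out non-orientable essential suborbifolds by passing to the orientation double cover, which is still covered by the (orientable) knot exteriors and to which the orientable argument applies.

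I expect the main obstacle to be the role of the \emph{flexible} cusp and why \emph{two distinct} knots are hypothesized. With a flexible cusp the cover $S^3 \setminus K \to \mathcal{O}$ need not be irregular, so the clean ``rigid cusp'' reductions of the earlier arguments are unavailable; the substantive work will be confirming that the JSJ/geometrisation argument above is genuinely insensitive to whether the cusp cross section is a turnover or an $S^2(2,2,2,2)$, and that truncating along the flexible cusp still produces the knot exterior $M_K$ with connected torus boundary (so that the connectedness step for $M_1$ survives). The hypothesis that the two knot complements are \emph{distinct} is presumably what prevents $\mathcal{O}$ from being a regular quotient in a way that would force a symmetry structure on the cusp; I would check carefully that distinctness is used exactly where one needs to avoid the degenerate case in which $\mathcal{O}$ admits no suborbifold obstruction purely for arithmetic or symmetry reasons, and I would make sure the APM meridian-carrying condition is applied to the correct side of each splitting.
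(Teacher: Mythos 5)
There is a genuine gap, and it is located exactly where you flagged your own uncertainty. Your plan transplants the proof of Proposition \ref{P:small} wholesale, but that proof hinges on the step ``the geometric pieces of $D(\mathcal{O}_1)$ incident to $\partial \mathcal{O}_1$ must be hyperbolic,'' which is deduced from the fact that a Euclidean \emph{turnover} cannot be a boundary component of a Seifert piece. That is precisely the rigid-cusp hypothesis at work. When the cusp is flexible, $\partial \mathcal{O}^{tr}$ is a torus or $S^2(2,2,2,2)$, both of which \emph{can} bound Seifert pieces, so you cannot conclude that $\partial M_K$ is a JSJ torus of $D(M_1)$ with hyperbolic pieces on both sides, and the final contradiction (the doubled annulus torus $T=D(A)$ cannot be isotoped off $\partial M_K$) evaporates: the characteristic submanifold of $(M_1,\partial M_1)$ containing $A$ simply doubles to a Seifert piece of $D(M_1)$ containing $\partial M_K$, which is perfectly consistent. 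A decisive sanity check: your argument, as written, runs on each knot separately and never genuinely uses the second knot, so it would prove that no non-small APM knot complement covers \emph{any} flexible-cusped orbifold --- but every knot complement covers itself, and the dodecahedral knots are alternating (hence APM) with non-small complements. So the single-cover version you are implicitly proving is false, and the argument must break at the step above.

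The paper's actual proof is structurally different and shows where the hypotheses of ``two'' and ``distinct'' and ``M'' (meridional) really enter. It first reduces, via \cite[Lemma 4.3 and Remark 4.4]{BBCW1}, to the case where $\mathcal{O}$ has a torus cusp; then $|\mathcal{O}|$ is a knot complement in a lens space, both knot exteriors cover $\mathcal{O}$ cyclically, and by \cite[Lemma 4.8]{BBCW1} the images $\bar{\mu}_{K_1},\bar{\mu}_{K_2}$ of the two meridians have intersection number $1$ on $\partial\mathcal{O}^{tr}$, hence are rationally independent. Given a closed essential $\mathcal{S}\subset\mathcal{O}$, the APM condition applied to $K_2$ makes $\bar{\mu}_{K_2}$ conjugate into $\pi_1^{orb}(\mathcal{S})$; lifting a power of it to $M_{K_1}$ produces a peripheral class rationally independent of $\mu_{K_1}$ that is freely homotopic into $S_1$, while the APM condition applied to $K_1$ gives an embedded annulus from $\mu_{K_1}$ to $S_1$. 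Two independent peripheral directions homotopic into $S_1$ force a component of the characteristic Seifert pair of $(N_1,\partial N_1)$ of the form $S^1\times S^1\times I$ with one end a subsurface of $S_1$, so $S_1$ would have a torus component, contradicting hyperbolicity. To repair your proof you would need to replace the doubling/JSJ-torus step with an argument of this kind that actually plays the two meridians against each other.
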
  

\begin{proof} 
Let $K_1, K_2$ be the two APM knots. By the proof of Lemma 4.3 of \cite{BBCW1} (see also \cite[Remark 4.4]{BBCW1}), their complements cover an orbifold with a torus cusp which, without loss of generality, we take to be $\mathcal{O}$. By \cite[Corollary 4.11]{BBCW1}, $|\mathcal{O}|$ is the complement of a knot in a lens space, and by \cite[Theorem 1.1]{GAW}, the knot complements $S^3 \setminus K_1, S^3 \setminus K_2$ cover $\mathcal{O}$ cyclically. Further, the images in $\O^{tr}$ of the two meridians $\mu_{K_1}, \mu_{K_2}$ of the knots represent primitive classes $\bar{\mu}_{K_1}, \bar{\mu}_{K_2}$ of intersection number 1 on the boundary torus of $\mathcal{O}^{tr}$ (\cite[Lemma 4.8]{BBCW1}). 

Consider a closed essential $2$-suborbifold $\mathcal{S}$ contained in the interior of $\mathcal{O}$. Then $|\mathcal{S}|$ is a closed submanifold of the orientable $3$-manifold $|\mathcal{O}|$, and as the latter is contained in a lens space, $|\mathcal{S}|$ is separating. Let $\mathcal{N}$ be the component of  $\mathcal{O}^{tr}$ cut open along $\mathcal{S}$ which contains $\partial \O^{tr}$. Next let $S_j$ be the inverse image of $\mathcal{S}$ in $M_{K_j}$, a closed, essential, separating surface. Let $N_j$ be the component of $M_{K_j}$ cut open along $S_j$ which contains $\partial M_{K_j}$. Then $(N_j, \partial N_j) \to (N, \partial N)$ is a finite cyclic cover for both $j$. 

Fix a base-point in $\partial \O^{tr}$ and lifts of it to $\partial M_{K_1}$ and $\partial M_{K_2}$. By hypothesis there is an essential annulus in $N_j$ which intersects $\partial M_{K_j}$ in a meridional curve and $S_j$ in an essential simple closed curve. In particular $\bar{\mu}_{K_2} \in \pi_1^{orb}(\mathcal{N})$ is represented by a loop in $\mathcal{N}$ of the form $\alpha \* \gamma \* \alpha^{-1}$ where $\alpha$ is a path in $\mathcal{N}$ connecting $\partial \O^{tr}$ to $\mathcal{S}$ and $\gamma$ is a loop in $\mathcal{S}$. If $n$ is the index of $\pi_1(S^3 \setminus K_1)$ in $\pi_1^{orb}(\mathcal{O})$, then $\bar{\mu}_{K_2}^n$ lifts to a class $\mu_2$ in $\pi_1(N_1)$ represented by a loop in $\partial M_{K_1}$ which is rationally independent of the class of $\mu_{K_1}$ in $H_1(\partial M_{K_2})$. Further, $\alpha \* \gamma^n \* \alpha^{-1}$ lifts to a loop of the form $\widetilde{\alpha} \* \beta \* \widetilde{\alpha}^{-1}$ where $\widetilde{\alpha}$ is a path in $N_1$ connecting $\partial M_{K_1}$ to $S_1$ and $\beta$ is a loop in $S_1$. Hence there is a singular annulus in $N_1$ which represents a homotopy between the loop $\mu_2$ in $\partial M_{K_1}$ and the loop $\beta$ in $S_1$. On the other hand, by hypothesis there is an embedded annulus in $N_1$ representing a homotopy between $\mu_{K_1}$ and another loop in $S_1$.  Hence there is a component of the characteristic Seifert pair of $(N_1, \partial N_1)$ homeomorphic to $S^1 \times S^1 \times I$ such that $S^1 \times S^1 \times \{0\}$ corresponds to $\partial M_{K_1}$ and $S^1 \times S^1 \times \{1\}$ corresponds to a subsurface of $S_1$. But this impossible as it implies that $S_1$ has a component homeomorphic to a torus, contrary to the hyperbolicity of $M_{K_1}$.  Thus $\mathcal{O}$ does not contain a closed essential $2$-suborbifold.
\end{proof}

\begin{corollary}\label{c:minimalorbi} If a commensurability class contains two distinct hyperbolic APM knot complements, then the minimal orientable orbifold in this commensurability 
class is small. 
\end{corollary}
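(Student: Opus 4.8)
The plan is to deduce Corollary \ref{c:minimalorbi} from Proposition \ref{p:apm small} by arranging to apply that proposition to the minimal orientable orbifold of the common commensurability class. The essential observation is that the cusp type of the minimal orientable orbifold dichotomizes the argument: if the minimal orbifold has a \emph{rigid} cusp, then each knot complement admits hidden symmetries, whereas if it has a \emph{flexible} (torus) cusp, Proposition \ref{p:apm small} applies directly. So first I would set $\mathcal{O} = \O_{min}(K_1)$, the minimal orientable orbifold in the commensurability class, and split into these two cases.

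In the flexible-cusp case, I would observe that two distinct APM knot complements in the commensurability class both cover $\O_{min}(K_1)$ (being the minimal orientable member of the class, it is covered by every orientable orbifold in the class, in particular by both knot complements). Since the cusp is flexible, Proposition \ref{p:apm small} applies verbatim with $\mathcal{O} = \O_{min}(K_1)$, yielding that $\mathcal{O}$ contains no closed essential $2$-suborbifold. Combined with its irreducibility (which follows from being covered by a hyperbolic, hence irreducible, knot exterior), this gives that $\mathcal{O}$ is small, as required.

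In the rigid-cusp case, the knots admit hidden symmetries, and I would invoke Corollary \ref{c:minimalsmall}: for a non-arithmetic hyperbolic AP knot (APM knots are AP) whose complement admits hidden symmetries, $\O_{min}(K)$ is already known to be small. One must first note that the class is non-arithmetic: the only arithmetic hyperbolic knot is the figure-eight, which is the unique knot in its commensurability class, so a class containing two distinct knot complements is automatically non-arithmetic. Thus Corollary \ref{c:minimalsmall} finishes this case.

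The main obstacle, and the only genuinely delicate point, is the bookkeeping that lets Proposition \ref{p:apm small} be applied to $\O_{min}(K_1)$ rather than to some auxiliary orbifold: one needs that the two distinct knot complements cover the \emph{same} target with a flexible cusp and that this target can be taken to be the minimal orientable orbifold. The proof of Proposition \ref{p:apm small} itself already produces a torus-cusped orbifold covered by both complements via Lemma 4.3 of \cite{BBCW1}, so the cleanest route is to verify that in the flexible case $\O_{min}(K_1)$ has a torus cusp and hence \emph{is} such a target; the remaining verifications (irreducibility and the reduction from ``no closed essential $2$-suborbifold'' to smallness) are routine.
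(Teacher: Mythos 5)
Your proof is correct and follows essentially the same route as the paper: split on whether the minimal orientable orbifold has a rigid or flexible cusp, using Proposition \ref{P:small} in the first case and Proposition \ref{p:apm small} in the second. The only cosmetic difference is that you route the rigid-cusp case through Corollary \ref{c:minimalsmall} (together with the observation that the class is non-arithmetic), whereas the paper cites Proposition \ref{P:small} directly; since that corollary is itself an immediate consequence of Proposition \ref{P:small}, the arguments coincide.
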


\begin{proof} If the minimal orientable orbifold has a rigid cusp, the result follows from Proposition \ref{P:small}. Otherwise it has a flexible cusp, and we apply Proposition \ref{p:apm small}.
\end{proof}




\section{Knot complements and reflection orbifolds} \label{s: kc&reflection orbifolds}

The goal of this section is to prove Theorem \ref{T:reflections}. We begin with a 
characterisation of the combinatorial type of small one-cusped reflection orbifolds.  

We say that a orbifold $\mathcal{O}$ is a \emph{one-cusped tetrahedral orbifold} if the  orbifold fundamental group $\pi_1^{\orb}(\mathcal{O})$ is generated by reflections in the faces of a tetrahedron with one ideal point. 

We say that $\mathcal{O}$ is a \emph{one-cusped orbifold of quadrilateral type} if $\pi_1^{\orb}(\mathcal{O})$ is generated by reflections in the faces of a cone over a quadrilateral where the cone point is ideal. 



\begin{lemma} \label{l:barrier} {\rm (The barrier lemma)}  Let $\mathcal{O}$ be a  3-orbifold whose interior has underlying space an open $3$-ball and which admits a complete finite volume hyperbolic structure. Let $\mathcal{O}^{tr}$ be the orbifold with boundary obtained by truncating the cusps of $\mathcal{O}$. Suppose that the singular set $\Sigma(\mathcal{O}^{tr})$ contains two disjoint $1$-cycles rel boundary which are separated by a $2$-sphere in $|\mathcal{O}|$. Then $\mathcal{O}$ contains a orbifold-incompressible 2-suborbifold. 
\end{lemma}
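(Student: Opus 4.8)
The plan is to prove this by building an explicit orbifold-incompressible 2-suborbifold using the hyperbolic geometry of $\mathcal{O}$, following the ``barrier surface'' philosophy that gives the lemma its name. The hypothesis provides two disjoint 1-cycles $C_1, C_2$ in $\Sigma(\mathcal{O}^{tr})$, separated by a 2-sphere $\Sigma \subset |\mathcal{O}|$. Since these 1-cycles lie in the singular set, which consists of reflector edges, they contribute to the underlying orbifold structure, and a 2-sphere in $|\mathcal{O}|$ separating them will typically meet $\Sigma(\mathcal{O})$ in singular points, making it a 2-suborbifold rather than a genuine manifold sphere. The idea is that such a separating sphere, made disjoint from the singular locus as much as possible, becomes a candidate 2-suborbifold, and one must show it can be chosen to be incompressible.

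First I would pass to the orientation double cover $\widetilde{\mathcal{O}}$ (as described before the lemma statement), whose interior has underlying space $\mathbb{R}^3$ with singular locus a trivalent graph embedded in the reflector plane $P$. The two 1-cycles $C_1, C_2$ lift to disjoint 1-cycles $\widetilde{C}_1, \widetilde{C}_2$ in $\Sigma(\widetilde{\mathcal{O}})$, still separated (after appropriate choice) by a sphere. Working upstairs in the manifold-with-locus $\widetilde{\mathcal{O}}$ lets me use the minimal-surface or normal-surface machinery cleanly. The core geometric step is to take the separating sphere and minimize, either by a least-area argument in the hyperbolic metric on the thick part or by a normal-surface/incompressibility argument, to produce an essential suborbifold. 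The ``barrier'' terminology suggests the right approach: the separated 1-cycles act as barriers preventing the minimized surface from being compressed across them to a trivial position, because any compression or boundary-parallelism would have to push the surface past a singular cycle, which is topologically obstructed.

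The key steps in order are: (i) set up the double cover and lift the separating data; (ii) realize a separating surface (the given sphere, or a 2-suborbifold it determines) and argue that since $C_1, C_2$ are genuine 1-cycles rel boundary lying on opposite sides, the surface is not boundary-parallel and carries essential topology; (iii) run a minimization or incompressibility argument — invoking irreducibility of $\mathcal{O}$ (which holds since $\mathcal{O}$ is covered by a hyperbolic knot exterior) — to conclude that either the surface is already orbifold-incompressible or compressing it yields a strictly simpler separating surface still separating the two cycles, terminating in an incompressible one; (iv) verify the finitely-covered-by-a-surface and non-bad conditions in the definition of orbifold-incompressible, which follow from the hyperbolicity and geometrization hypotheses. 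The main obstacle I expect is step (iii): controlling the compression process so that the presence of the two separated singular cycles genuinely prevents degeneration to a boundary-parallel or compressible suborbifold, and in particular ruling out that every separating suborbifold is a trivial spherical orbifold bounding an orbi-ball — this is exactly where the disjointness and the ``separated by a sphere'' hypothesis must be used decisively, by tracking the intersection of the candidate surface with the singular graph and using an Euler-characteristic or innermost-disk argument to show essentiality is preserved.
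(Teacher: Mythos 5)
Your proposal has the right skeleton buried in step (iii) --- compress, pass to a simpler separating suborbifold, and terminate --- but the engine you lean on (least-area or normal-surface machinery in a double cover) is both unnecessary and problematic, and you leave unproved exactly the point on which the paper's argument turns. The paper's proof is purely combinatorial: among all $2$-suborbifolds $\mathcal{S}$ with $|\mathcal{S}|\cong S^2$, meeting $\Sigma(\mathcal{O}^{tr})$ transversely in the interiors of its edges, and separating the two cycles $a$ and $b$, choose one with the \emph{minimal number of cone points}. The decisive step you are missing is why a compressing orbi-disc $\mathcal{D}$ for $\mathcal{S}$ must be \emph{disjoint} from $a$ and $b$: an orbi-disc carries at most one cone point, i.e.\ $|\mathcal{D}|$ meets the singular set in at most one point, and since $|\mathcal{O}|$ is a ball the algebraic intersection number of $|\mathcal{D}|$ with each $1$-cycle rel boundary is zero, so ``at most one point'' forces ``no points.'' This disjointness is what guarantees that surgering $\mathcal{S}$ along $\mathcal{D}$ yields two spheres, one of which still separates $a$ from $b$; and because $\partial\mathcal{D}$ is essential in $\mathcal{S}$, each of the two discs it bounds on $\mathcal{S}$ carries at least two cone points, so the surgered sphere has strictly fewer cone points --- contradicting minimality. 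Without the disjointness argument your ``compress and repeat'' loop can destroy the separation property, and without the cone-point count as the complexity (rather than area or Euler characteristic) there is no termination argument. Note also that least-area techniques presuppose essentiality ($\pi_1$-injectivity) of the surface being minimized, which is precisely what is to be proved, so that route is circular as stated.

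Two further gaps. First, the clause ``not a spherical orbifold bounding an orbi-ball'' does not follow from hyperbolicity or geometrization, as your step (iv) suggests; it follows from the cycles themselves: the side of $\mathcal{S}$ containing $\partial\mathcal{O}^{tr}$ has disconnected boundary, hence is not an orbi-ball, while the other side contains a singular $1$-cycle disjoint from $\mathcal{S}$, whereas the singular set of an orbi-ball is a single arc or a tripod. (The finite manifold cover coming from hyperbolicity is used only for the ``finitely covered by a surface'' clause.) Second, your opening move of passing to the orientation double cover rests on a misreading: the lemma does not assume $\mathcal{O}$ is a reflection orbifold with reflector edges, and in both of its applications (Lemma \ref{l:charsmall} and Theorem \ref{thm:areflection}) it is invoked for an \emph{orientable} orbifold whose singular set is a graph of cone-type edges. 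Even in a genuinely non-orientable case, an incompressible suborbifold found upstairs does not automatically descend; that would require an equivariant argument you do not supply, and which the paper's direct approach renders unnecessary.
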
 

\begin{proof} Call the two 1-cycles $a$ and $b$. By hypothesis there is  a 2-suborbifold $\mathcal{S}$ of $\mathcal{O}$ such that 
\begin{enumerate} 
\item $|\mathcal{S}| \cong S^2$;
\item $|\mathcal{S}|$ separates $a$ and $b$; 
\item $\mathcal{S}$ meets $\Sigma(\mathcal{O}^{tr})$ transversely and in the interior of its edges;
\item $\mathcal{S}$ has the minimal number of cone points among all 2-suborbifolds satisfying the first three conditions.  
\end{enumerate} 
We will prove that $\mathcal{S}$ is is the desired $2$-suborbifold. Recall that  a 2-suborbifold $\mathcal{S} \subset \mathcal(O)$ is {\it orbifold-incompressible} if there is no orbifold-essential curve on $\mathcal{S}$ that bounds an orbi-disc in $\mathcal{O}$, 
$\mathcal{S}$ is not a spherical orbifold which bounds an orbi-ball, and $\mathcal{S}$ is finitely covered by a surface. 

Since $\mathcal{O}$ is finitely covered by a hyperbolic 3-manifold, $\mathcal{S}$ is finitely covered by a surface.  The orbifold $\mathcal{S}$ cuts $\mathcal{O}^{tr}$ into two pieces, one of which contains $\partial \mathcal{O}^{tr}$. This piece cannot be an orbi-ball.  The piece not containing $\partial \mathcal{O}^{tr}$ cannot be an orbi-ball either, since its singular set contains a cycle which does not meet $\mathcal{S}$ and the singular set of an orbi-ball is a single arc or a tripod.  It remains to show that $\mathcal{S}$ does not admit a compressing orbi-disk.

Suppose that $\mathcal{S}$ admits a compressing orbi-disk $\mathcal{D}$. Since $|\mathcal{O}|$ is a ball, $|\mathcal{D}|$ intersects $a$ and $b$ zero times algebraically. Hence it is disjoint from $a$ and $b$ as it has at most one cone point. Since $\mathcal{S}$ has underlying space a sphere, $\partial \mathcal{D}$ bounds two discs on $\mathcal{S}$, each of which has at least two cone points as $\partial \mathcal{D}$ is essential on $\mathcal{S}$. One of the two $2$-suborbifolds constructed from $\mathcal{D}$ and these two discs on $\mathcal{S}$ satisfies conditions (1), (2) and (3) and has fewer cone points, contrary to (4). Thus $\mathcal{S}$ is orbifold-incompressible. 
\end{proof} 

\begin{lemma} \label{l:charsmall} 
Let $\mathcal{O}$ be a one-cusped hyperbolic reflection orbifold which does not contain a closed orbifold-incompressible $2$-suborbifold. Then $\mathcal{O}$ is either a one-cusped tetrahedral orbifold or a one-cusped orbifold of quadrilateral type. 
\end{lemma}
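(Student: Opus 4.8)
The plan is to pass to the orientation double cover $\widetilde{\mathcal{O}}$, where the singular set becomes an honest graph, read off the combinatorial type of $\mathcal{O}$ from the complexity of that graph, and use the barrier lemma (Lemma~\ref{l:barrier}) to convert any excess complexity into a closed orbifold-incompressible $2$-suborbifold.

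First I would record the local structure. Since $\mathcal{O}=\mathbb{H}^3/\Gamma$ is a finite-volume hyperbolic reflection orbifold, its fundamental polyhedron $P$ is a finite-volume convex hyperbolic Coxeter polyhedron. By the classification of finite and affine rank-$3$ Coxeter groups (equivalently, Andreev's theorem), every finite vertex of $P$ has a spherical triangle link and so is trivalent, while the single ideal vertex $v_\infty$ has a Euclidean link which is either a reflection triangle, doubling to cusp cross section $S^2(2,3,6)$, $S^2(2,4,4)$ or $S^2(3,3,3)$, or the square $(2,2,2,2)$. Thus exactly $L\in\{3,4\}$ edges of $P$ run into $v_\infty$. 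In $\widetilde{\mathcal{O}}$ the interior has underlying space $\mathbb{R}^3$, the deck transformation $\rho$ of $\widetilde{\mathcal{O}}\to\mathcal{O}$ is a reflection whose fixed plane $\Pi$ contains the singular graph $G=\Sigma(\widetilde{\mathcal{O}})$, and after truncating the cusp, $G$ is a graph properly embedded in the disk $\Delta=\Pi\cap|\widetilde{\mathcal{O}}^{tr}|$, with $L$ univalent endpoints (legs) on $\partial\Delta$ and all other vertices trivalent. Writing $V$ for the number of these interior vertices, a degree count gives $E_c=(3V-L)/2$ compact edges, whence $b_1(G)=(V-L)/2+1$, which also counts the bounded complementary regions of $G$.

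The key reduction is: if $G$ contains two disjoint $1$-cycles rel $\partial$ separated by a $2$-sphere, then $\mathcal{O}$ contains a closed orbifold-incompressible $2$-suborbifold, contrary to hypothesis. To see this I would apply the barrier lemma to $\widetilde{\mathcal{O}}$, the crucial point being that $\rho$ fixes $\Pi$, hence fixes $G$ pointwise; the separating cycles and a separating sphere may therefore be taken $\rho$-invariant, and running the barrier construction in the $\rho$-invariant category yields a $\rho$-invariant incompressible sphere-orbifold $\mathcal{S}\subset\widetilde{\mathcal{O}}$, so that $\mathcal{S}/\rho$ is the desired $2$-suborbifold of $\mathcal{O}$. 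Granting this, it remains to show that the absence of such a pair forces $b_1(G)=1$. One first checks $G$ is connected (a second component would yield a disjoint cycle directly) and that $b_1(G)=0$ is impossible, since the corresponding $P$ (a tripod or a single arc with legs) carries no finite-volume hyperbolic structure. Then $b_1(G)=1$ means $G$ is a single $L$-cycle with one leg at each vertex, i.e. $V=L$, whence $P$ is a tetrahedron ($L=3$) or a cone over a quadrilateral with ideal apex ($L=4$), exactly the two asserted models.

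The main obstacle is therefore the purely combinatorial statement that $b_1(G)\ge 2$ produces two disjoint $1$-cycles rel $\partial$ (the separation by a circle in $\Pi$ being automatic for a planar graph). Two observations drive the argument. First, because every interior vertex of $G$ is trivalent, any two distinct faces incident to such a vertex already share an edge there; consequently two bounded regions of $G$ that share no edge have vertex-disjoint boundary cycles, giving the required pair at once. Second, in the remaining case where all bounded regions are pairwise edge-adjacent --- which for $b_1=2$ means two regions $f_1,f_2$ glued along a common sub-path --- I would combine the boundary cycle of one region with a rel-$\partial$ arc running between two of the $L\ge 3$ legs through the vertices of $f_1\cup f_2$ unused by that cycle, as in the ``house graph'' model. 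Completing this case analysis for $b_1=2$ (larger $b_1$ only makes disjoint cycles easier to locate) is the technical heart of the proof; once it is in place, feeding the resulting incompressible suborbifold into the barrier lemma as above yields the lemma.
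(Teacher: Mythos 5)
Your setup coincides with the paper's: pass to the orientation double cover, view the singular set as a trivalent graph $G$ properly embedded in a disc with $L\in\{3,4\}$ univalent ends on the boundary, and use the barrier lemma (Lemma \ref{l:barrier}) to forbid a pair of disjoint $1$-cycles rel boundary. The gap is in your combinatorial endgame. The step ``$b_1(G)=1$ means $G$ is a single $L$-cycle with one leg at each vertex'' is false, and it is exactly where the remaining content lies. Take $L=4$: let $G$ consist of a triangle $u_1u_2u_3$, an edge $u_1w$, legs from $u_2$ and $u_3$ to boundary points $q_1,q_2$, and two legs from $w$ to boundary points $q_3,q_4$. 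This graph is connected, planar (properly embedded in the disc with the $q_i$ on its boundary), has every interior vertex trivalent and every boundary vertex univalent, and has $b_1(G)=1$; it even passes all local orbifold tests (label every edge $2$: each interior vertex link is the spherical orbifold $S^2(2,2,2)$, the cusp cross-section is $S^2(2,2,2,2)$, and there is no bad $2$-suborbifold), yet it is not a $4$-cycle with legs. It is excluded only by a further application of the barrier lemma --- the triangle is disjoint from the arc $q_3wq_4$ --- not by its Betti number. So your trichotomy ($b_1=0$ impossible, $b_1\geq 2$ contradicts the barrier lemma, $b_1=1$ done) does not close the proof: the no-disjoint-pair hypothesis must be exploited again, structurally, inside the $b_1=1$ case. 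In addition, your $b_1\geq 2$ step is only sketched; you yourself call the $b_1=2$ analysis ``the technical heart,'' and the remark that larger $b_1$ ``only makes disjoint cycles easier to locate'' is an assertion, not an argument.

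For comparison, the paper's proof never mentions Betti numbers. For each pair of adjacent boundary vertices it takes the outermost embedded arc of the singular graph joining them and shows every such arc has exactly three edges: a two-edge arc yields a boundary-compressing orbi-disc, contradicting hyperbolicity, while an arc with four or more edges yields, via an edge-path leaving one of its middle vertices, an absolute cycle disjoint from some other outermost arc, contradicting the barrier-lemma consequence that every absolute cycle meets every boundary-to-boundary arc. Trivalence then saturates every interior vertex, so the graph is exactly the $n$-cycle with $n$ legs, $n=3$ or $4$. That direct structural argument is what your $b_1(G)=1$ case is missing; if you replace the Betti-number bookkeeping by an outermost-arc (or equivalent) analysis that invokes the no-disjoint-pair property throughout, your strategy does go through. (A smaller point, which the paper also elides: the barrier lemma produces an incompressible $2$-suborbifold in $\widetilde{\mathcal{O}}$, while the hypothesis concerns $\mathcal{O}$; your idea of running the construction $\rho$-equivariantly and quotienting is the right fix, but the cone-point-minimizing compression argument in the invariant category needs justification, e.g. by equivariant Dehn-lemma type results.)
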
 

\begin{proof}  An alternative proof of this lemma can be deduced from Thurston's notes \cite{Th}, particularly from the work contained in chapter 13.  

By hypothesis, $|\Sigma(\mathcal{O})| \cong \mathbb R^2$ and as the orientation double cover $\widetilde{\mathcal{O} } \rightarrow \mathcal{O}$ is obtained by doubling $\mathcal{O}$ along $\Sigma(\mathcal{O})$, 
\begin{itemize}
\item the covering group of the cover $\widetilde{\mathcal{O} }\rightarrow \mathcal{O}$ is generated by a reflection $r$;
\item $\Sigma(\widetilde{\mathcal{O}})$ is contained in the reflection plane of $r$; 
\item $|\widetilde{\mathcal{O}}| \cong \mathbb R^3$. 
\end{itemize}
Further observe that a cross section of the cusp of $\mathcal{O}$ is a Euclidean reflection $2$-orbifold and as such is generated by reflections in the faces of either a triangle or a quadrilateral. Thus the end of $\widetilde{\mathcal{O}}$ has cross section a Euclidean $2$-orbifold with underlying space a $2$-sphere and either three or four cone points. Let $\widetilde{\mathcal{O}}^{tr}$ denote the truncation of $\widetilde{\mathcal{O}}$. The boundary of $\widetilde{\mathcal{O}}^{tr}$ is the double of the cusp cross section of $\widetilde{\mathcal{O}}$ and so is either $S^2(2,3,6), S^2(2,4,4), S^2(3,3,3)$ or $S^2(2,2,2,2)$. The intersection of the reflection plane of $r$ with $\widetilde{\mathcal{O}}^{tr}$ is a disc $P$. 

Set $\Sigma = P \cap \Sigma(\widetilde{\mathcal{O}}) = \Sigma(\widetilde{\mathcal{O}}^{tr})$, since $\mathcal{O}$ is a reflection orbifold.  By construction $\Sigma \cap \partial P$ consists of the three or four cone points which we call the {\it boundary vertices of $\Sigma$}. Further, $\Sigma$ admits the structure of a graph whose vertices have valency $3$ when contained in $\hbox{int}(P)$ (the {\it interior vertices}) and valency $1$ otherwise. If $\Sigma$ has a component contained in $\hbox{int}(P)$, it is separated from the other components of $\Sigma$ by a circle embedded in $\hbox{int}(P)$. But this is impossible as otherwise there would be a reducing $2$-sphere in $\widetilde{\mathcal{O}}$. Hence if $\Sigma$ is not connected, there is a properly embedded disc $D$ in $\widetilde{\mathcal{O}}^{tr}$ whose intersection with $P$ is a properly embedded arc which separates $P$ into two pieces, each containing components of $\Sigma$. Now $\partial D$ splits the $2$-sphere $|\partial \widetilde{\mathcal{O}}^{tr}|$ into two discs, each containing at least one point. In fact each contains two cone points as otherwise $\widetilde{\mathcal{O}}$ would contain a bad $2$-suborbifold. But then $\partial D$ is essential in $\partial \widetilde{\mathcal{O}}^{tr}$, which is impossible. Thus $\Sigma$ is connected. 

Suppose that $\Sigma$ contains an arc $a$ connecting two points on $\partial P$ and an absolute cycle $b$ which is disjoint from $a$. It is easy to see that there is a $2$-sphere in $|\widetilde{\mathcal{O}}|$ which separates $a$ and $b$ and therefore $\widetilde{\mathcal{O}}$ contains an orbifold-incompressible $2$-suborbifold by Lemma \ref{l:barrier}, contrary to our hypotheses. Thus each (absolute) cycle in $\Sigma$ contains at least one vertex of any arc in $\Sigma$ connecting two points of $\partial P$.

Next we observe that as all interior vertices of $\Sigma$ have valency $3$, there is a unique outermost embedded arc in $\Sigma$ connecting any two of its boundary vertices which are adjacent on $\partial P \cong S^1$. Note that distinct outermost arcs which share an endpoint share the edge connecting that endpoint to an interior vertex. Number these outermost arcs $a_1, \ldots, a_n$ where $n = 3$ or $n = 4$ and $a_i$ shares exactly one endpoint with $a_{i+1}$, where the indices are taken $\mod n$.   
 
We claim each $a_i$ has three edges. Indeed, it is easy to construct a boundary compressing orbi-disc in $\widetilde{\mathcal{O}}^{tr}$ if some $a_i$ has two edges. Suppose then that there are more than three on some $a_i$. Then there are at least three interior vertices of $a_i$, say $v_1$, $v_2$ and $v_3$, where $v_1$ and $v_3$ are adjacent to $\Sigma \cap \partial P$ and $v_2$ is not. Consider the edge $e_1$ of $\Sigma$ incident to $v_2$ but not contained in $a_i$. Since the interior vertices of $\Sigma$ have valency $3$, there is an oriented edge-path $e_1, e_2, \ldots, e_m$ in $\Sigma$ such that 
\begin{itemize}
\item $e_k$ and $e_{k+1}$ are distinct edges for each $k$ ;
\item it connects $a_i$ to some $a_j$ (it is possible that $i = j$); 
\item $e_2, \ldots, e_{m-1}$ are disjoint from $a_1 \cup \ldots \cup a_n$. 
\end{itemize}
It is easy to see that there is an $l \ne i, j$ such that $a_l$ is disjoint from an absolute cycle contained in $a_i \cup e_1 \cup \ldots \cup e_m \cup a_j$, contrary to what we deduced above. Thus each $a_i$ has exactly three edges and the theorem follows quickly from this observation. 
\end{proof} 

We can now prove Theorem \ref{T:reflections}. 

\begin{proof}[Proof of Theorem \ref{T:reflections}]
First we show that $\mathcal{O}$ is a one-cusped tetrahedral orbifold. By Lemma \ref{l:charsmall} we need only show that $\mathcal{O}$ is not a one-cusped orbifold of quadrilateral type. 
Suppose otherwise. Then the cusp cross section of the orientation double cover $\widetilde{\mathcal{O}} \rightarrow \mathcal{O}$ is $S^2(2,2,2,2)$. The cover $S^3 \setminus K \to \mathcal{O}$ factors through $\widetilde{\mathcal{O}}$ so by \cite[\S 6.2]{Re1}, the induced cover $S^3 \setminus K \to \widetilde{\mathcal{O}}$ is regular and dihedral. This contradicts Lemma \ref{l: quad type}. Thus $\mathcal{O}$ is a one-cusped tetrahedral orbifold. 

The cusp cross section of $\widetilde{\mathcal{O}}$ is either $S^2(2,3,6)$,  $S^2(3,3,3)$ or $S^2(2,4,4)$. Our strategy is to determine the indices of the singular locus of $\widetilde{\mathcal{O}}$. Here is an immediate constraint they satisfy. 
\begin{eqnarray}
\hbox{\hspace{-2.5cm} The link of each interior vertex is a spherical 2-orbifold.}
\end{eqnarray}
Hoffman \cite[Proposition 4.1]{Neilsmall} gives further restrictions on $H_1(\widetilde{\mathcal{O}})$ which hold for any orbifold covered by a knot complement.  Namely: 
\begin{eqnarray}
\hbox{\hspace{-.7cm}  $H_1(\widetilde{\mathcal{O}})$ is a quotient  of $\Z/2 \Z$ if $\widetilde{\mathcal{O}}$ has cusp cross section $S^2(2,3,6)$.}
\end{eqnarray}
We consider each possible cusp cross section in turn. Call an edge of $\Sigma(\widetilde{\mathcal{O}})$ {\it peripheral} if it is one of the non-compact edges which is properly embedded in the cusp of $\widetilde{\mathcal{O}}$. 

First suppose that the cusp cross section is $S^2(2,3,6)$. Since $\O$ is tetrahedral,  the indices on the edges of the interior triangle of the singular set $\Sigma(\widetilde{\mathcal{O}})$ determine the orbifold.  By restriction (1), the two edges of the triangle meeting the peripheral edge of $\Sigma(\widetilde{\mathcal{O}})$ labeled $6$ are  labeled 2. Similarly the third edge is labeled $2,3,4,$ or $5$. The third edge cannot be labeled $2$ or $4$ as otherwise $H_1(\widetilde{\mathcal{O}})$ is $\Z/2 \Z \times \Z/2 \Z$, violating restriction (2) above. If it is labeled $3$, $\O$ is an arithmetic orbifold and is covered by
 the figure-eight knot complement, and hence $K$ is the figure-eight knot (combine \cite[\S 2]{NR2} and \cite{Re1}).  If the third edge is labeled 5, $\TO$ is the minimal orbifold in the commensurability class of the dodecahedral knot complements \cite[\S 9]{NR1}. By the main result of \cite{Neildodec}, $K$ is one of the dodecahedral knots. In both cases $\widetilde{\mathcal{O}}$ is a minimal element of this orientable commensurability class. 
 
 If the cusp-cross section is $S^2(3,3,3)$, restriction (1) implies that two of the edges of the interior triangle are labeled 2. But then $\widetilde{\mathcal{O}}$ admits a reflection with quotient the orientation double cover of a one-cusped tetrahedral orbifold and this double cover has cusp cross section $S^2(2,3,6)$. Thus $K$ is the figure-eight knot. 
 
Finally suppose that the cusp cross section is $S^2(2,4,4)$. We will show that this assumption leads to a contradiction. 

Without loss of generality we can suppose that $\mathcal{O}$ does not cover another reflection orbifold non-trivially. By restriction (1), at least one of the two edges of the triangle meeting a peripheral edge labeled $4$ is labeled 2. The two edges of the triangle incident to the peripheral edge labeled $2$ cannot both have the same label as otherwise $\widetilde{\mathcal{O}}$ would admit a reflection with quotient the orientation double cover of a one-cusped tetrahedral orbifold. In particular one of these edges  has label $3$ or more. By restriction (1) it has label $3$, and then it is easy to see that the other two edges of the triangle are labeled $2$. Thus $\mathcal{O}$ is arithmetic and is the minimal element in the commensurability class of the Whitehead link (combine \cite[\S 2]{NR2} and \cite[Example 1]{Wie}), and therefore is different from the class of the figure-eight knot complement. But then by \cite{Re1}, there are no knots in this commensurability class of $\mathcal{O}$. This proves (2). 
\end{proof} 




\section{Commensurators containing a reflection} \label{s: contain reflection}

This section is devoted to the proof of Theorem \ref{thm:areflection}. Because of Proposition \ref{P:small} this theorem applies to a hyperbolic AP knot $K$  whose complement's orientable commensurator quotient admits a reflection, but whose full commensurator quotient is not a reflection orbifold. It gives strong restrictions on the topology and combinatorics of $\O_{min}(K)$.

\begin{proof}[Proof of Theorem \ref{thm:areflection}]
By Lemma \ref{l:prelim}(1), $S^3 \setminus K$ admits hidden symmetries and so by \cite[Corollary 4.11]{BBCW1}, $\O_{min}(K)$ has underlying space a ball. Further, $\O_{min}(K)$ has a rigid cusp so its cusp cross section is of the form $S^2(2,3,6), S^2(2,4,4)$ or $S^2(3,3,3)$. 

The full commensurator quotient $\mathcal{O}_{full}(K)$ of $S^3 \setminus K$ is the quotient of $\mathcal{O}_{min}(K)$ by the hypothesized reflection $r: \O_{min}(K) \to \O_{min}(K)$. If each of the three peripheral (i.e. non-compact) edges of the ramification locus of is invariant under $r$, then the cusp cross section of $\O_{min}(K)$ is a reflection orbifold and therefore Lemma \ref{l:reflection} implies that $\mathcal{O}_{full}(K)$ is a reflection orbifold. By Theorem \ref{T:reflections}, $K$ is one of the dodecahedral knots, so $\O_{min}(K)$ is a one-cusped tetrahedral orbifold (cf. \cite[\S 9]{NR1}). Thus $\O_{min}(K)$ satisfies (a). Assume below that this does not happen. Then $r$ leaves exactly one of the peripheral edges of $\Sigma(\O_{min})$ invariant. It follows that cusp cross section of $\O_{min}$ is either $S^2(2,4,4)$ or $S^2(3,3,3)$ (cf. Remark \ref{236 implies reflection}). In the first case $r$ preserves the peripheral edge labeled $2$. 

Denote the truncation of $\O_{min}(K)$ by $\O_{min}(K)^{tr}$ and let $P$ be the intersection of $\O_{min}(K)^{tr}$ with the reflection plane of $r$. Then $|\mathcal{O}_{min}(K)^{tr}|$ is homeomorphic to a $3$-ball and $P$ a properly embedded disc in $|\mathcal{O}_{min}(K)^{tr}|$. By assumption, the circle $\partial P$ contains exactly one of the cone points of the cusp cross section of $\O_{min}(K)$ contained in $\partial \O_{min}(K)^{tr}$. Further, the two remaining cone points both have order $3$ or both have order $4$. 

An open regular $r$-invariant neighborhood of $P \cup \partial \O_{min}(K)^{tr}$ in $\O_{min}(K)^{tr}$ has complement consisting of two connected orbifolds $\mathcal{B}_L$ and $\mathcal{B}_R$. By construction $r(\mathcal{B}_L) = \mathcal{B}_R$. Both $\mathcal{B}_L$ and $\mathcal{B}_R$ have underlying space a $3$-ball. 

Let $\mathcal{S}_L = \partial \mathcal{B}_L$ and observe that $|\Sigma \cap \mathcal{S}_L| \geq 2$. For if $|\Sigma \cap \mathcal{S}_L| = 0$, the singular set of $\O_{min}(K)$ is contained in $P$, contrary to our assumptions, and if $|\Sigma \cap \mathcal{S}_L| = 1$, $\mathcal{S}_L$ would be a bad $2$-suborbifold of $\O_{min}(K)$. 

We claim that $|\Sigma \cap \mathcal{S}_L| \leq 3$. Suppose otherwise. Then $\mathcal{S}_L$ has four or more cone points, at least one of which has order $3$ or $4$, so it is a hyperbolic $2$-orbifold. By hypothesis, $\mathcal{S}_L$ is orbifold-compressible in $\O_{min}(K)$.  Thus there is a compressing orbi-disc $\mathcal{D}$ which meets the singular locus of $\O_{min}(K)^{tr}$ in at most one cone point. Assume that $\mathcal{D}$ is chosen to minimize the number of components of $\mathcal{D} \cap P$ and consider a $2$-suborbifold $\mathcal{I}$ of $\mathcal{D}$ whose boundary is a component $\mathcal{D} \cap P$ which is innermost on $\mathcal{D}$. Then $\mathcal{I} \cup r(\mathcal{I})$ is a $2$-suborbifold of $\O_{min}(K)$ with underlying space $S^2$. Since $\mathcal{D}$, and therefore $\mathcal{I}$, has at most one cone point, $\mathcal{I} \cup r(\mathcal{I})$ has at most two cone points. It cannot have one as otherwise $\O_{min}(K)$ would contain a bad $2$-suborbifold. Thus it has zero or two cone points, and if two, they are cone points of the same order. It follows that $\mathcal{I} \cup r(\mathcal{I})$ is a spherical $2$-suborbifold of $\O_{min}(K)$ and hence must bound an orbi-ball. But then we can reduce the number of components of $\mathcal{D} \cap P$, contradicting our assumptions. Thus $\mathcal{D}$ is disjoint from $P$. 
Since $\mathcal{S}_L$ is one boundary component of a regular neighborhood of $\partial \O_{min}(K)^{tr} \cup P$, removing $\mathcal{B}_L$ from the component of $\O_{min}(K)^{tr} \setminus P$ which contains it results in a product orbifold $\mathcal{P}_L = \mathcal{S}_L \times (0,1)$. Hence any compressing orbi-disc $\mathcal{D}$ for $\mathcal{S}_L$ is contained in $\mathcal{B}_L$. 

Consider a compressing orbi-disc $\mathcal{D}$ for $\mathcal{S}_L$ in $\mathcal{B}_L$. Since $\mathcal{P}_L$ is a product, it contains an annulus $A$ cobounded by $\partial \mathcal{D}$ and a simple closed curve on $P$. Then $\mathcal{D} \cup A \cup r(A) \cup r(\mathcal{D})$ is a $2$-suborbifold with either zero or two cone points.  In either case it must be a spherical $2$-suborbifold bounding an orbi-ball. But then $ \partial \mathcal{D}$ would be inessential in $\mathcal{S}_L$, contrary to our assumptions. We conclude that $\mathcal{S}_L$, and therefore $\mathcal{S}_R = r(\mathcal{S}_L)$, has at most three cone points. Since $\O_{min}(K)$ does not contain an orbifold-incompressible $2$-suborbifold, both $\mathcal{B}_L$ and $\mathcal{B}_R$ must be orbi-balls. 

First consider the case where there are three cone points in $\mathcal{S}_L$. Then the singular loci of the orbi-balls $\mathcal{B}_L$ and of $\mathcal{B}_R$ are tripods. By construction, the endpoints of two edges of each of these tripods lie in $P$ while the endpoints of the third edges are cone points of equal order ($3$ or $4$) on $\partial \O_{min}^{tr}$. The union of the two tripods contains a circular $1$-cycle $a_0$ homeomorphic to a circle and two relative $1$-cycles $a_1$ and $a_2$ homeomorphic to intervals and properly embedded in $\O_{min}(K)$. 

Set $\Sigma = \Sigma(\O_{min}(K)) \cap \O_{min}(K)^{tr}$ and consider $\Sigma \cap P$. There are two  points $x_1, x_2$ in $\Sigma \cap P$ which correspond to the intersection of the legs of the tripods with $P$. Note that each of these two points may or may not be a vertex of $\Sigma(\O_{min}(K))$. If we remove the elements of $\{x_1, x_2\}$ which are not vertices of $\Sigma(\O_{min}(K))$ from $\Sigma$, what remains of $\Sigma \cap P$ inherits the structure of a graph from $\Sigma(\O_{min}(K))$ whose vertices have valency $1$ or $3$. Those of valency $1$ are $x_0$, the unique cone point of $\partial \O_{min}^{tr}$ contained in $\partial P$, and whichever of $x_1, x_2$ is a vertex of $\Sigma(\O_{min}(K))$.

Suppose that there is a circle $b$ in the graph $\Sigma \cap P$. Let $E$ be the interior of the disc in $P$ that it bounds. If $E$ contains both $x_1$ and $x_2$ or neither of them, then it is easy to construct a $2$-sphere in the interior of $|\O_{min}(K)|$ which separates $a_0$ and $b$. If it contains exactly one of $x_1$ and $x_2$, then one of the relative $1$-cycles $a_1, a_2$ can be separated from $b$ by a $2$-sphere. Each of these possibilities contradicts the barrier lemma (Lemma \ref{l:barrier}). Thus $\Sigma \cap P$ is a finite union of trees. Each tree containing an edge has at least two extreme vertices, each extreme vertex has valency $1$, and the vertices of valency $1$ of $\Sigma \cap P$ are contained in $\{x_0, x_1, x_2\}$. Since all other vertices have valency $3$ and the tree $T_0$ containing $x_0$ has at least one edge, it is easy to argue that either 
\begin{itemize}
\item $T_0$ is an interval with boundary $\{x_0, x_1\}$, say, and $\O_{min}(K)$ is a one-cusped tetrahedral orbifold, or
\item 

$T_0$ is a tripod with extreme vertices $\{x_0, x_1, x_2\}$ and  the underlying graph of $\Sigma(\O_{min}(K))$ is as depicted in (b) and (c) of the statement of the theorem. 
\end{itemize}
In the first case, $\O_{min}(K)$ satisfies (a). Suppose that the second case arises and recall that we noted above that the cusp cross section of $\O_{min}(K)$ is $S^2(3,3,3)$ or $S^2(2,4,4)$. The requirements that the interior vertices of $\Sigma(\O_{min}(K))$ correspond to spherical quotients and that $\O_{min}(K)^{tr}$ has no orientation-preserving symmetry allows us to determine the local isometry groups of $\O_{min}(K)$ and we conclude that (b) occurs when the cusp cross section is $S^2(3,3,3)$ and that (c) occurrs when it is $S^2(2,4,4)$. 

Now consider the case when there are two cone points in $\mathcal{S}_L$. Then both $\mathcal{B}_L$ and $\mathcal{B}_R$ are quotients of a ball by a finite cyclic  rotational action, and $\Sigma \cap P$ has one point $x_1$ where $\Sigma$ meets $P$ transversely. The vertices of $\Sigma \cap P$ of valency $1$ include $x_0$ and are contained in $\{x_0, x_1\}$. All other vertices have valency $3$. Let $\Sigma_0$ be the component of $\Sigma \cap P$ containing $x_0$. If $\Sigma_0$ is a tree, it must be an interval (cf. the previous paragraph) with boundary $\{x_0, x_1\}$. Any other component of $\Sigma \cap P$ would contain $1$-cycles, which is easily seen to contradict the barrier lemma. Thus $\Sigma \cap P = \Sigma_0$, so that $\Sigma$ is a tripod with Euclidean labeling, which is impossible. 

Suppose then that $\Sigma_0$ contains circular $1$-cycles. Any other component of $\Sigma \cap P$ has at most one extreme vertex and so must contain circular $1$-cycles as well, which is easily seen to contradict the barrier lemma. Thus $\Sigma \cap P = \Sigma_0$.  

Fix a circular $1$-cycle $b$ in $\Sigma_0$ and observe that if $b$ contained $x_1$, then $x_1$ would be a vertex of $\Sigma(\O_{min}(K))$ of valency at least $4$, which is impossible. It follows from the barrier lemma that the interior of the disc in $P$ bounded by $b$ must contain $x_1$. If there is another circular $1$-cycle $b'$ in $\Sigma_0$, the barrier lemma implies that $b$ and $b'$ have a non-empty intersection, and using the fact that their vertices have valency $3$ in $P$, they share at least one edge. It is then easy to see that $\Sigma_0$ contains a third circular $1$-cycle which does not enclose $x_1$, which contradicts the barrier lemma. Thus $b$ is the only circular $1$-cycle in $\Sigma_0$, and it is easy to argue that $\Sigma_0$ satisfies one of the following two scenarios:
\begin{itemize}
\item $b$ consists of a single edge with end-point $x_2$ and $\Sigma_0$ is the union of $b$ and an edge connecting $x_2$ to $x_0$.  To avoid contradicting the barrier lemma, $x_1$ must be in the interior of the disk bounded by $b$, and will be isolated in $\Sigma \cap P$. 
\item $b$ is the union of two edges and $\Sigma_0$ is the union of $b$ and two other edges - one connecting $x_0$ to one vertex of $b$ and the other connecting $x_1$ to the other vertex of $b$.
\end{itemize}
We can rule out the first possibility since it would imply that $\O_{min}(K)$ admits a rotational symmetry of angle $\pi$. Thus the second possibility holds and therefore the underlying graph of $\Sigma(\O_{min}(K))$ is as depicted in (d). The requirements that the interior vertices of $\Sigma(\O_{min}(K))$ correspond to spherical quotients and that $\O_{min}(K)^{tr}$ has no orientation-preserving symmetry determines the local isometry groups. We conclude that the cusp cross section is $S^2(3,3,3)$ and that the labels on the edges of $\Sigma(\O_{min}(K))$ are as given in (d). This completes the proof. 
\end{proof}

\medskip\noindent

\vspace{.5cm} 
{\scriptsize

\noindent Michel Boileau, Institut de Math\'ematiques de Marseille I2M, UMR 7373, Universit\'e d'Aix-Marseille, Technop\^{o}le Ch\^{a}teau-Gombert, 
39, rue F. Joliot Curie, 13453 Marseille Cedex 13, France 
\newline\noindent
e-mail: michel.boileau@cmi.univ-mrs.fr \\

\noindent
Steven Boyer, D\'ept. de math., UQAM, P. O. Box 8888, Centre-ville, Montr\'eal, Qc, H3C 3P8, Canada
\newline\noindent
e-mail: boyer@math.uqam.ca \\ 

\noindent
Radu Cebanu, Department of Mathematics, Boston College, Chestnut Hill, MA 02467-3806, USA 
\newline\noindent
e-mail: radu.cebanu@gmail.com \\ 

\noindent
Genevieve S. Walsh, Dept. of Math., Tufts University, Medford, MA 02155, USA
\newline\noindent
e-mail: genevieve.walsh@gmail.com

}

\end{document}